\documentclass[11pt, reqno]{amsart}

\usepackage{geometry}
\setlength{\topskip}{\ht\strutbox} 

\geometry{paper=a4paper,left=32mm,right=32mm,top=35mm}

\usepackage{color}
\usepackage{amssymb}
\usepackage{amsmath}
\usepackage{arydshln}
\usepackage{hyperref}
\usepackage{bbm}
\usepackage{mathrsfs}
\usepackage{enumerate}

\usepackage{graphicx}
\usepackage{subfig}



\numberwithin{equation}{section}

\newcommand{\R}{\mathbb{R}}

\renewcommand{\P}{\mathbb{P}}
\newcommand{\E}{\mathbb{E}}

\newcommand{\cal}[1]{\mathcal{#1}}

\newtheorem{theorem}{Theorem}[section]

\newtheorem{lemma}[theorem]{Lemma}
\newtheorem{remark}[theorem]{Remark}
\newtheorem{definition}[theorem]{Definition}

\begin{document}

\title{Stability properties of mild solutions of SPDEs related to  Pseudo  Differential Equations} 


\author{Vidyadhar Mandrekar}
\address[Vidyadhar Mandrekar]{Department of Statistics and Probability\\
Michigan State University\\
East Lansing, MI, USA}

\author{Barbara R\"udiger}
\address[Barbara R\"udiger]{School of Mathematics and Natural Sciences\\ University of Wuppertal, Germany}
\email[Barbara R\"udiger]{ruediger@uni-wuppertal.de}

 \begin{abstract}  This is a review article which presents part of the contribution of Sergio Albeverio to the study of existence and uniqueness of solutions of SPDEs driven by jump processes  and their stability properties. The results on stability properties obtained in Albeverio et al. \cite{AGMRS2017} are presented in a slightly  simplified and  different way.
 \end{abstract}

 \maketitle

\keywords{stochastic partial differential equations; non -Gaussian additive noise;  existence;   uniqueness; It\^{o} formula,  invariant measures for infinite-dimensional dissipative sytems}\\

\noindent {\bf AMS classification 2020: } 60H15,  60G51, 37L40, 60J76

\section{Introduction}\label{s:Intro}
 The theory of SPDE's driven by Brownian motion was studied for a long time and solutions taking values in a Hilbert space are described in \cite{DZbook}, \cite{GMbook} based on previous work. Sergio Albeverio was among the  first mathematicians to initiate the study of SPDE driven by jump processes \cite{AWZ1998} with solutions in Hilbert spaces in contrast  to Kallianpur and Xiong \cite{KXbook}  who studied generalized solutions. In order to study these equations in general, Sergio et al. provided the L\'evy -It\^{o} decomposition in Banach spaces \cite{AR2005}. There was a previous approach by E. Dettweiler \cite{Dettweiler1983}, where the stochastic integrals are defined differently from those of  It\^{o}. In  \cite{AR2005} it was  however proven that the definitions are equivalent.
 Starting from   \cite{AR2005}(M- type 2 and type 2) Banach valued stochastic integrals with respect to L\'evy processes and compensated Poisson random measures associated to additive processes were defined in \cite{Ru2004}, \cite{MR2009}, \cite {MRbook}, including the case of separable Hilbert space valued stochastic integrals. (For the theory of stochastic integration on Banach spaces see  also \cite{PZbook}, \cite{vNVWbook}  and references there) 
 Following this, the article  \cite{AMR2009} establishes the basic generalization  of classical work for mild solutions  of SPDE's driven by L\'evy processes and associated Poisson random noise.
 An It\^{o}-formula was proved in this case \cite{RZ2006} which was later generalized in \cite{MRT2013} and further in \cite{AGMRS2017}. It has interesting  applications to stability of solutions of such SPDEs which originated in \cite{MRT2013}  and have been continued in \cite{MRbook} and  by Albeverio et al.  in \cite{AGMRS2017}.\\
 Our project in this paper is to present first a review of the work mentioned. The results related to the stability properties obtained in \cite{AGMRS2017} are presented in Section 4 in  a simplified and slightly different  way, by involving a dissipativity condition (condition i) in Theorem \ref{Thdissipativity}). These motivated further investigations of  stability properties of SPDEs with multiple invariant measures in \cite{FFRSch2020}, which introduces  a ''generalized dissipativity condition'',  that are not reported in this paper due to stipulated page limitations  for this article.\\
 
 \section{Stochastic Integrals and It\^{o} -formula}
  Consider a filtered probability space $(\Omega, \mathcal{F}, \{\mathcal{F}_t\}_{t\geq 0}, \mathbb{P})$ satisfying the usual conditions.
 Let $H$ be a separable Hilbert space  with norm $\|\cdot\|_H$ and scalar product $<\cdot,\cdot>_H$, which for simplicity we will often denote with $\|\cdot\|$  and $<\cdot,\cdot>$. Let  $\{L_t\}_{t \geq 0}$ be an $H$ - valued L\'evy process on $(\Omega, \mathcal{F}, \{\mathcal{F}_t\}_{t\geq 0}, \mathbb{P})$. Let $\mathcal{B}(H)$ denote the Borel -$\sigma$ - Algebra on $H$. For $B$$\in \mathcal{B}(H)$ with $0 \notin \overline{B}$, we define 
 $$
 N((0,t]\times B)= \sum_{0< s \leq t}  \mathbbm{1}_{B}(\Delta L_s) \quad t \geq 0
 $$
and 
$$  N((0,t]\times \{0\})= 0 $$
 We define 
 \begin{eqnarray*}
 \beta:\,\,\mathcal{B}(H) & \, \to\,& \mathbb{R}_+ :=\{t \in \mathbb{R}: t \geq 0\}\\
 B & \, \to\,&\beta(B):=\mathbb{E}[N((0,1]\times B]
 \end{eqnarray*}
 Observe that the random measure $N(dt, dx)$ induced on  $ \mathcal{B}(\mathbb{R}_+)  \otimes \mathcal{B}(H)$  is a Poisson random measure with compensator $\nu(dt,dx)$$:=dt \otimes \beta(dx)$.  We recall that on the trace $\sigma $ -algebra   $ \mathcal{B}(\mathbb{R}_+)  \otimes \mathcal{B}(H\setminus \{0\})$ the  Poisson random measure and its compensator are $\sigma$ -finite measures , but might not be finite, since the jumps of the underlying  L\'evy process  $\{L_t\}_{t \geq 0}$ in a time interval $[0,T]$ are numerable, but might not be finite. These are however finite on each set $[0,T]\times A$  with $A\in \mathcal{B}(H)$,  $0\notin  \overline{A}$ . We shall be dealing with non -Gaussian L\'evy processes, i.e. 
 $$L_t - \int_0^t \int_{\|x\|_H \leq 1} x N(dt,dx)=0 \quad \forall t \geq 0$$ 
 (See e.g. Proposition 3.3.8 in \cite{MRbook} or  Section 4.5 in \cite{PZbook}.)\\
 
 \noindent We denote with  $q(dt,dx):=N(dt,dx)-\nu(dt,dx)$  the compensated Poisson random measure associated to $N(dt,dx)$. \\
 
 \noindent Let us remark that $M_\cdot:=$$\{M_t\}_{t \geq 0} $    with
 $M_t:=q((0,t]\cap A\times B) $
 is   for each   $A\in\mathcal{B}(\mathbb{R}_+)$ and $ B\in \mathcal{B}(H) $ with $\beta(B)< \infty$ a $\{\mathcal{F}_t\}_{t \geq 0}$ - martingale  (See e.g. Lemma 2.4.7 in \cite{MRbook}.)\\
 
 \noindent Let $F$ be a separable Hilbert  space. 
 Let ${\rm Ad}(F)$ denote the space of all functions $f:\,
  \mathbb{R}_+ \times H\times \Omega\, \to\, F$ which are  adapted on the enlarged space 
 $$
(\tilde{\Omega},\tilde{\mathcal{F}},\tilde{\mathcal{F}}_t,\tilde{\mathbb{P}}) = (\Omega \times H,\mathcal{F} \times \mathcal{B}(H),(\mathcal{F}_t \times \mathcal{B}(H))_{t \geq 0},\mathbb{P} \otimes \beta).
$$
 We can define the It\^{o} - Integral of $f$ w.r.t. the compensated Poisson random meansure $q(ds,dx)$  basically starting with simple funtions which are square integrable w. r. t. $\mathbb{P} \otimes \beta$ and then  by density arguments  for all $f\in L_{{\rm ad}}^2(F):= L^2(\tilde{\Omega} \times   \mathbb{R}_+ , \tilde{\mathcal{F}} \otimes \mathcal{B}(  \mathbb{R}_+ ),\mathbb{P} \otimes \beta\otimes \lambda; F) \cap {\rm Ad}(F) $, where $\lambda$ denotes the Lebesgues -measure. (See  e.g Section 3.5 in \cite{MRbook}.)\\
 
\noindent The  It\^{o} - Integral of $f$ w.r.t. the compensated Poisson random meansure $q(ds,dx)$ 
\begin{equation} \label{Ito integral} Z_t:= \int_0^t \int_A f(s,x) q(ds,dx) \quad t\geq 0 \end{equation}
is then  a square integrable martingale for all $A\in \mathcal{B}(H)$ such that $0 \notin \overline{A}$.\\

\noindent  Through stopping times the  It\^{o} - Integral  (\ref{Ito integral}) can be extended also to all $f\in$ $\mathcal{K}_{\infty,\beta}^2(F)$ which denotes  the linear space of all progressively measurable functions $f : \mathbb{R}_+ \times  H  \times \Omega  \rightarrow F$ such that
 
 $$\mathbb{P}\left(\int_0^t \int_H \|f(s,x)\|^2  q(ds,dx)<\infty \right) =1 \quad \forall t\geq 0.$$
 
\noindent  The It\^{o} -Integral (\ref{Ito integral}) is then  a local martingale (see e.g. \cite{MRbook} Section 3.5, where this theory has been discussed for $F$ being a separable  Banach space).\\

\noindent Let us present the It\^{o} -Formula for the It\^{o} - Process
 $(Y_t)_{t \geq 0}$, with
\begin{equation}\label{Ito process}
Y_t:=Z_t+\int_0^t F_s ds+ \int_0^t \int_\Lambda k(s,x)
N(ds,dx),
\end{equation}

\noindent where $(Z_t)_{t \geq 0}$ is defined through equation  (\ref{Ito integral}),  $F_\cdot:=$$\{F_t\}_{t \geq 0} $  is an  $F$ -valued $\{\mathcal{F}_t\}_{t \geq 0} $ adapted process, which satisfies 
 $$\mathbb{P}\left(\int_0^t \|F_s\| \, ds\,<\infty \right) =1 \quad \forall t\geq 0, $$ 
$\Lambda\in\mathcal{B}(H)$ is a set with $\beta(\Lambda) < \infty$, 
$k : \Omega \times \mathbb{R}_+ \times H \rightarrow F$ is a progressively measurable process. Moreover
$k$ is c\`adl\`ag or c\`agl\`ad $\beta(dx) \otimes  \mathbb{P}$--almost surely and 
 \begin {eqnarray*} \int_0^t \int_{\Lambda} \| k(s,x) \|  \, \nu(ds,dx) \, <\infty \quad \mathbb{P} \,-a.s..\end {eqnarray*}\\

\noindent Let  ${\mathcal H} \in C_b^{1,2}(\mathbb{R}_+ \times H; F)$, the space of functions ${\mathcal H}\,:  \mathbb{R}_+ \times H \, \to \, F$ which are differentiable in $t\in \mathbb{R}_+$ and twice Fr\'ech\'et  differentiable in $x \in H$, with bounded derivatives. Then similar to  \cite{IWbook} (for the finite dimensional case) it can be proven that the following It\^{o} -Formula holds . (See e.g. \cite{Abook} or  \cite{RZ2006} for the Banach valued  case.)
\begin{theorem} \label{Ito} Let $A \in \mathcal{B}(H)$.
Assume 
 \begin {eqnarray*} \int_0^t \int_{A} \| f(s,x) \| \nu(ds,dx) \, <\infty \quad \mathbb{P} \,-a.s..\end {eqnarray*} 
 or 
 \begin {eqnarray*} \int_0^t \int_{A} \| f(s,x) \|^2 \nu(ds,dx) \, <\infty \quad \mathbb{P} \,-a.s..\end {eqnarray*}
\begin{enumerate}
\item We have $\mathbb{P}$--almost surely

\begin{eqnarray*}
&{\mathcal H} (t,Y_t) = {\mathcal H} (0,Y_0) + \int_0^t \partial_s {\mathcal H} (s,Y_{s}) ds + \int_0^t \partial_y {\mathcal H} (s,Y_{s}) F_s ds 
\\&\quad + \int_0^t \int_A \big( {\mathcal H} (s,Y_{s-} + f(s,x)) - {\mathcal H} (s,Y_{s-}) \big) q(ds,dx)
\\ &\quad + \int_0^t\int_A \big( {\mathcal H} (s,Y_{s} + f(s,x))- {\mathcal H} (s,Y_{s})- \partial_y {\mathcal H} (s,Y_{s})f(s,x) \big) \nu(ds,dx)
\\ &\quad + \int_0^t \int_{\Lambda} \big( {\mathcal H} (s,Y_{s-}
+ k(s,x)) - {\mathcal H} (s,Y_{s-}) \big) N(ds,dx), \quad t \geq 0.
\label{Ito-formula}\end{eqnarray*}
where

\item for all $t \in \mathbb{R}_+$ we have $\mathbb{P}$--almost surely
\begin{eqnarray*}
&\int_0^t \| \partial_s {\mathcal H} (s,Y_{s}) \| ds < \infty, 
\\  &\int_0^t \int_A \|{\mathcal  H} (s,Y_{s} + f(s,x)) - {\mathcal H} (s,Y_{s}) \|^2 \nu(ds,dx) < \infty,
\\  &\int_0^t \int_A \|{\mathcal H } (s,Y_{s} + f(s,x))-
{\mathcal H} (s,Y_{s})- \partial_y {\mathcal H} (s,Y_{s})f(s,x) \| \nu(ds,dx) < \infty,
\\  &\int_0^t \int_{\Lambda} \| {\mathcal H}(s,Y_{s-} + k(s,x))- {\mathcal H} (s,Y_{s_-}) \| N(ds,dx) < \infty.
\end{eqnarray*}

\end{enumerate}
\end{theorem}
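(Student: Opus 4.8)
The plan is to prove the It\^o formula by reducing the general statement to the two classical building blocks: the It\^o formula for pure-jump processes of finite activity (which handles the $N(ds,dx)$ and the associated compensated part), and the It\^o formula for a process with an absolutely continuous drift together with a stochastic integral against $q(ds,dx)$. First I would treat the finite-activity piece. Since $\beta(\Lambda)<\infty$, the process $t\mapsto \int_0^t\int_\Lambda k(s,x)\,N(ds,dx)$ is a finite sum of jumps on each compact time interval, so one can enumerate its jump times $\tau_1<\tau_2<\cdots$ and, between consecutive jumps, apply the continuous-in-time It\^o formula while at each $\tau_j$ add the exact increment ${\mathcal H}(s,Y_{s-}+k(s,x))-{\mathcal H}(s,Y_{s-})$; telescoping these contributions produces exactly the last $N(ds,dx)$-integral in the statement. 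This is the standard argument, e.g.\ as in \cite{IWbook}, and requires only the integrability hypothesis $\int_0^t\int_\Lambda\|k(s,x)\|\,\nu(ds,dx)<\infty$ to guarantee the jump sum is well defined.

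Next I would handle the genuinely infinite-activity part coming from $Z_t=\int_0^t\int_A f(s,x)\,q(ds,dx)$ together with the drift $\int_0^t F_s\,ds$. The natural route is truncation: set $A_n=A\cap\{\|x\|_H>1/n\}$ and let $Z_t^n=\int_0^t\int_{A_n}f(s,x)\,q(ds,dx)$. On $A_n$ the measure $\beta$ is finite, so $Z^n$ is again a finite-activity jump process plus a drift (the compensator term $-\int_0^t\int_{A_n}f\,\nu$), and the It\^o formula for $Y_t^n:=Z_t^n+\int_0^tF_s\,ds+\int_0^t\int_\Lambda k\,N$ follows from the previous paragraph, using the elementary identity that splits the jump contribution into a $q$-integral plus a $\nu$-integral of the second-order-type remainder ${\mathcal H}(s,Y_s+f)-{\mathcal H}(s,Y_s)-\partial_y{\mathcal H}(s,Y_s)f$. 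The point of writing it in this compensated form is precisely that this remainder is the one that survives in the limit.

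The main obstacle, and the heart of the proof, is passing to the limit $n\to\infty$ in the four integral terms and showing that each converges to its counterpart for $Y_t$. Here is where the hypotheses enter decisively. Under the first alternative, $\int_0^t\int_A\|f\|\,\nu(ds,dx)<\infty$, one controls the $q$-integral term via its $\nu$-integral bound using that ${\mathcal H}$ has bounded first derivative, so $\|{\mathcal H}(s,Y_{s-}+f)-{\mathcal H}(s,Y_{s-})\|\le C\|f\|$, and the remainder term is $O(\|f\|)$ as well after a mean-value estimate, giving dominated convergence directly. Under the second alternative, $\int_0^t\int_A\|f\|^2\,\nu(ds,dx)<\infty$, the $q$-integral term is estimated in $L^2$ through the It\^o isometry together with $\|{\mathcal H}(s,Y_{s-}+f)-{\mathcal H}(s,Y_{s-})\|^2\le C\|f\|^2$, so $Z^n_\cdot\to Z_\cdot$ and the corresponding stochastic integrals converge uniformly on compacts in probability (Doob's inequality); the quadratic remainder satisfies $\|{\mathcal H}(s,Y_s+f)-{\mathcal H}(s,Y_s)-\partial_y{\mathcal H}(s,Y_s)f\|\le \tfrac12 C\|f\|^2$ by the boundedness of the second Fr\'echet derivative, which makes its $\nu$-integral absolutely convergent and amenable to dominated convergence. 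In either case one also checks $Y^n_s\to Y_s$ and $Y^n_{s-}\to Y_{s-}$ uniformly on compacts in probability, so continuity of ${\mathcal H}$ and its derivatives transfers the convergence inside the integrands; a subsequence argument upgrades this to $\mathbb P$-a.s.\ convergence of the identity.

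Finally, part (2) — the a.s.\ finiteness of the four integrals appearing in (1) — is obtained en route: each of the stated bounds is exactly the estimate used to justify dominated convergence, namely $\|\partial_s{\mathcal H}\|$ is bounded so its time-integral is finite; the two $f$-terms are dominated respectively by $C\|f\|^2$ and $\tfrac12C\|f\|^2$ (or by $C\|f\|$ under the first hypothesis, which is stronger than what is needed since $\nu$ is $\sigma$-finite and $f$ is locally bounded on $A_n$), hence finite by hypothesis; and the $k$-term is a finite sum because $\beta(\Lambda)<\infty$, bounded by $\int_0^t\int_\Lambda\|k\|\,N(ds,dx)<\infty$ using the assumed $\nu$-integrability of $\|k\|$ and the fact that $N$ and $\nu$ agree in expectation. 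I expect the bookkeeping of the localization by stopping times (to reduce to the case where $Y$ stays in a bounded set, so that ``bounded derivatives'' can be used freely) to be the only other technical nuisance, but it is routine given the local-martingale framework already set up before the theorem.
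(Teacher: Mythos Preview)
The paper does not supply its own proof of this theorem; it simply remarks that the result ``can be proven similar to \cite{IWbook} (for the finite dimensional case)'' and refers to \cite{Abook} and \cite{RZ2006} for the Banach-valued setting. Your sketch---separating off the finite-activity $N$-part by enumerating jump times, truncating the $q$-integral via $A_n=A\cap\{\|x\|>1/n\}$, applying the finite-activity formula there, and then passing to the limit using the mean-value bounds $\|{\mathcal H}(s,y+f)-{\mathcal H}(s,y)\|\le C\|f\|$ and $\|{\mathcal H}(s,y+f)-{\mathcal H}(s,y)-\partial_y{\mathcal H}(s,y)f\|\le \tfrac{C}{2}\|f\|^2$---is precisely the standard argument those references carry out, so there is nothing to compare beyond saying your outline is correct and in line with the cited sources.

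One small remark: your last paragraph says localization by stopping times is needed ``so that `bounded derivatives' can be used freely,'' but in this theorem ${\mathcal H}\in C_b^{1,2}$ already has globally bounded derivatives. The reason you still need to localize is different: the hypotheses are only $\mathbb{P}$-a.s.\ finiteness of $\int_0^t\int_A\|f\|\,\nu$ or $\int_0^t\int_A\|f\|^2\,\nu$, not finiteness in expectation, so to invoke the It\^o isometry in $L^2$ (and Doob's inequality) you must first stop when these running integrals exceed a fixed level. This is the ``local-martingale framework'' you allude to, and it is routine, but the justification you gave for it is slightly off.
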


\noindent However we are often interested in applying the It\^{o} formula to functions ${\mathcal H}$ which are only in $  C^{1,2}(\mathbb{R}_+ \times H; H)$, i.e where  the Fr\'ech\'et derivatives are not necessarily  bounded. Especially for stochastic models  applied to physics we might be interested in taking advantage of conservation of energy of a random process and would like to compute $\|Y_t\|^2$. 
Remark however that $H(y) = \| y \|^2$ is of class $C^2(H;\mathbb{R})$.

\noindent Let us define 
\begin{definition}
 A continuous, non-decreasing function $h : \mathbb{R}_+ \rightarrow \mathbb{R}_+$ is \emph{quasi-sublinear}  if there is a constant $C > 0$ such that
\begin{eqnarray*}
h(x+y) &\leq C \big( h(x) + h(y) \big), \quad x,y \in \mathbb{R}_+,
 \\ h(xy) &\leq C h(x) h(y), \quad x,y \in \mathbb{R}_+.
\end{eqnarray*}
\end{definition}

\noindent  In \cite{MRT2013} the following was proved:
\begin{theorem}\label{thm-Ito}
Let us assume 
\begin{enumerate}
\item[a)] ${\mathcal H} \in C^{1,2}(\mathbb{R}_+ \times H; F)$ is a function such that
\begin{eqnarray*}\label{qe-1}
\| \partial_y {\mathcal H} (s,y) \| \leq h_1(\| y \|), \quad (s,y) \in \mathbb{R}_+ \times H
\\ \label{qe-2} \| \partial_{yy}{\mathcal  H} (s,y) \| \leq h_2(\| y \|), \quad (s,y) \in \mathbb{R}_+ \times H
\end{eqnarray*}
for quasi-sublinear functions $h_1,h_2 : \mathbb{R}_+ \rightarrow \mathbb{R}_+$.

\item[b)] $f : H  \times \mathbb{R}_+ \times \Omega \rightarrow F$ is a progressively measurable process such that for all $t \in \mathbb{R}_+$ we have $\mathbb{P}$--almost surely
\begin{eqnarray*}
&\int_0^t \int_A \| f(s,x) \|^2 \nu(ds,dx) + \int_0^t \int_A h_1( \| f(s,x) \|)^2 \| f(s,x) \|^2  \nu(ds,dx) 
\\& \quad + \int_0^t \int_A h_2( \| f(s,x) \|) \| f(s,x) \|^2 \nu(ds,dx) < \infty.
\end{eqnarray*}

\end{enumerate}
Then the It\^{o} -Formula 1. with  2. holds.
\end{theorem}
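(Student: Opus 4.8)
The plan is to deduce Theorem \ref{thm-Ito} from Theorem \ref{Ito} by a truncation-and-approximation argument that replaces the unbounded $C^{1,2}$ function $\mathcal{H}$ by a family of $C_b^{1,2}$ functions agreeing with it on large balls, and then passing to the limit using the quasi-sublinearity hypotheses together with integrability assumption b). First I would introduce cutoffs: fix a smooth function $\varphi_R : H \to [0,1]$ with $\varphi_R(y)=1$ for $\|y\|\le R$, $\varphi_R(y)=0$ for $\|y\|\ge 2R$, and with $\|\partial_y\varphi_R\|$, $\|\partial_{yy}\varphi_R\|$ bounded uniformly in $R$ (e.g. built from a fixed real bump function composed with $\|\cdot\|^2$, which is legitimate since $y\mapsto\|y\|^2$ is $C^2$ on $H$). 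Then $\mathcal{H}_R(s,y):=\mathcal{H}(s,y)\varphi_R(y)$ lies in $C_b^{1,2}(\mathbb{R}_+\times H;F)$, so Theorem \ref{Ito} applies to $\mathcal{H}_R$, provided the integrability premise of that theorem holds; the second alternative there, $\int_0^t\int_A\|f(s,x)\|^2\nu(ds,dx)<\infty$, is exactly the first term in hypothesis b), so we are entitled to write down the It\^o formula for each $\mathcal{H}_R$.

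Next I would localize in $\omega$ and $t$. Introduce the stopping times $\tau_R:=\inf\{t\ge 0 : \|Y_t\|\ge R \text{ or } \|Y_{t-}\|\ge R\}$; since $Y$ is c\`adl\`ag, $\tau_R\uparrow\infty$ $\mathbb{P}$-a.s. On the stochastic interval $[0,\tau_R)$ the process $Y$ stays in the ball of radius $R$, so $\mathcal{H}_R(s,Y_s)=\mathcal{H}(s,Y_s)$ and $\partial_y\mathcal{H}_R(s,Y_s)=\partial_y\mathcal{H}(s,Y_s)$ there. The only subtlety is that the jump increments $Y_{s-}+f(s,x)$ and $Y_{s-}+k(s,x)$ can leave the ball; here one splits the jump domain $A$ (resp. $\Lambda$) according to whether $\|f(s,x)\|$ (resp. $\|k(s,x)\|$) is small or large relative to $R$, and handles the large-jump part as a finite sum (there are only finitely many such jumps on $[0,t]$ because $\beta(\Lambda)<\infty$ and, on the compensated side, because the integrand is $\nu$-integrable by b)), so both $\mathcal{H}$ and $\mathcal{H}_R$ give honest, absolutely convergent contributions that one can compare termwise. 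Thus the It\^o formula for $\mathcal{H}_R$ stopped at $\tau_R$ coincides, term by term, with the asserted formula for $\mathcal{H}$ stopped at $\tau_R$.

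The main obstacle is the passage $R\to\infty$: one must show each integral term for $\mathcal{H}_R$ converges (a.s., along the stopping times) to the corresponding term for $\mathcal{H}$, and in particular that the limiting integrals are a.s.\ finite — which is precisely the content of conclusion 2. of the theorem. This is where hypothesis b) does the work. The key elementary estimate is that, by Taylor's theorem in the Fr\'echet sense plus quasi-sublinearity of $h_1,h_2$,
\begin{eqnarray*}
&\| \mathcal{H}(s,y+v) - \mathcal{H}(s,y) \| \le \|v\|\sup_{0\le\theta\le1} h_1(\|y+\theta v\|) \le C\, h_1(\|y\|)\, \|v\|\, \big(1 + h_1(\|v\|)\big),
\\ &\| \mathcal{H}(s,y+v) - \mathcal{H}(s,y) - \partial_y\mathcal{H}(s,y)v \| \le \tfrac12 \|v\|^2 \sup_{0\le\theta\le1} h_2(\|y+\theta v\|) \le C\, h_2(\|y\|)\, \|v\|^2\, \big(1 + h_2(\|v\|)\big),
\end{eqnarray*}
so that on $[0,\tau_R)$, where $\|y\|\le R$ so $h_1(\|y\|), h_2(\|y\|)$ are bounded constants, the integrands are dominated by $C_R$ times the integrands appearing in b). Hence each term is a.s.\ finite up to $\tau_R$; letting $R\to\infty$ and using $\tau_R\uparrow\infty$ gives 2., and then the a.s.\ convergence of every term (dominated convergence for the $\nu$- and $N$-integrals, the localized $L^2$-isometry / dominated convergence for the $q$-integral) upgrades the $\mathcal{H}_R$-identity to the $\mathcal{H}$-identity. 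One should also note that the drift term $\int_0^t\partial_y\mathcal{H}(s,Y_s)F_s\,ds$ is a.s.\ finite because $\partial_y\mathcal{H}(s,Y_s)$ is locally bounded along $\tau_R$ and $\int_0^t\|F_s\|\,ds<\infty$ by assumption, and $\int_0^t\|\partial_s\mathcal{H}(s,Y_s)\|\,ds<\infty$ by continuity of $\partial_s\mathcal{H}$; assembling these finiteness statements yields 2.\ and completes the proof.
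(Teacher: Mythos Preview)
The paper does not itself prove this theorem: it is stated as a result from \cite{MRT2013} (the sentence introducing it reads ``In \cite{MRT2013} the following was proved''), with only the statement reproduced. Hence there is no in-paper argument to benchmark your proposal against. Your strategy --- truncate $\mathcal{H}$ to a $C_b^{1,2}$ function, invoke Theorem~\ref{Ito}, localize with the hitting times $\tau_R$, derive the Taylor-type bounds on the first- and second-order jump increments from quasi-sublinearity, and pass to the limit via dominated convergence using hypothesis~b) --- is the standard route to such an extension and is indeed the approach taken in the cited reference.

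One small technical correction is worth making. With the cutoff $\varphi_R\equiv 1$ only on $\{\|y\|\le R\}$ and the stopping time $\tau_R$ keeping $Y_{s-}$ in that \emph{same} ball, a small jump can still push $Y_{s-}+f(s,x)$ outside $\{\|y\|\le R\}$, where $\mathcal{H}_R$ need not equal $\mathcal{H}$; so the assertion that the stopped formulas ``coincide term by term'' is not literally correct for the jump integrals. The clean fix is to decouple the two radii --- e.g.\ truncate at radius $2R$, stop at radius $R$, and split jumps at size $R$ --- so that $\|Y_{s-}\|<R$ and $\|f(s,x)\|<R$ force $\|Y_{s-}+f(s,x)\|<2R$, where $\mathcal{H}_{2R}=\mathcal{H}$. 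With this adjustment your outline is sound.
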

\begin{remark} We remark that $H(y) = \| y\|^2$ is of class $C^2(H;\mathbb{R})$  and 
 \begin{equation}
 H_y(y)v = 2 < y, v >\quad and \quad H_{yy}(y)(v)(w) = 2 < v,w >.
\end{equation}
 so that if for all $t \in \mathbb{R}_+$ we have $\mathbb{P}$--almost surely  $\int_0^t \int_A \| f(s,x) \|^2 \nu(ds,dx)< \infty$ and  $\int_0^t \int_A \| f(s,x) \|^4 \nu(ds,dx)< \infty$, then Theorem \ref{thm-Ito} can be applied to 
$\mathcal{H} (s,y):=H(y)=\|y\|^2$ .
\end{remark}
 
 



\section{SPDEs on Hilbert spaces }
In this section we shall be studying Stochastic Partial Differential Equations (SPDEs) driven by L\'evy processes. Let $(H, \|\cdot\|_H)$ be a Hilbert space and $A$ be an infinitesimal generator of a semigroup $\{S_t, t \geq 0\}$ on $H$ to $H$. This means 
\begin{enumerate}
\item[i)] $S_0= I$
\item[ii)] $S_{s+t}=S_s S_t \quad \forall s,t\geq 0$
\end{enumerate}
We also assume that  $\{S_t, t \geq 0\}$ is strongly continuous, i.e. 
\begin{enumerate}
\item[iii)] $\lim_{t \to 0} S_t x=0$ (in norm $\|\cdot\|_H$)
\end{enumerate}
 If  $\{S_t, t \geq 0\}$ is a semigroup satisfying the above properties , we call it a ''strongly continuous semigroup'' ($C_0$ - Semigroup). For such a semigroup we note that there exists $\alpha \geq 0$ and $M\geq 1$  such that the operator norm in the space $L(H)$ of bounded linear operators from $H$ to $H$   satisfies 
  $$\quad \|S_t\|_{L(H)}\leq M e^{\alpha t}\quad t \geq 0.$$
  We call the semigroup $\{S_t, t \geq 0\}$ ''pseudo -contraction'' semigroup if $M=1$,   ''uniformly bounded  -semigroup'' if $\alpha =0$ and ''contraction semigroup'' if $M=1$ and $\alpha =0$.  \\
  If $t\, \to \, S_t$ is differentiable  for all $x \in H$  then the semigroup  $\{S_t, t \geq 0\}$ is differentiable.\\
  
 \noindent  Let $\{S_t\}$$:=\{S_t, t \geq 0\}$ be a $C_0$ -semigroup on $H$. The linear operator $A$ with domain 
$$ 
\mathcal {D}(A):=\{ x \in H, \lim_{t \to {0^+}} \frac{S_t x-x} {t} \quad exists\}$$
defined by \index{D@${\cal D}(A)$}
$$ Ax = \lim_{t \to {0^+}} \frac{S_t x-x} {t} $$
is called the infinitesimal generator (i.g.) of  $\{S_t\}$. \index{semigroup!infinitesimal generator}\\

\noindent The following facts for an i.e.  $A$ of a $C_0$ -semigroup $\{S_t\}$ are well known (see e.g. \cite{Pazy}):\\
\begin{enumerate}
\item [(1)] \quad  For $x \in H$ 
$ \lim_{h \to 0} \frac {1}{h} \int_t^{t+h} S_s x ds =S_t x$.
\item [(2)] \quad For $x \in {\cal D}(A)$, $S_t x\in {\cal D}(A)$ and 
$\frac {d} {dt} S_t x= A S_t x=S_t A x$.
\item [(3)] \quad  For $x \in H$, $\int_0^t S_s x ds \in {\cal D}(A)$ and 
 $ A \int_0^t S_s x ds=S_t x -x$.
\item [(4)] \quad $ {\cal D}(A) $ is dense in $H$ and $A$ is a closed operator.
\item[(5)] \quad Let  $f:\, [0,T] \,\to\, \mathcal{D}(A)$ be a measurable function with $\int_0^T \|f(s)\|_{\mathcal{D}(A)} ds< \infty $, then $\int_0^T f(s) ds \in \mathcal{D}(A)$  and 
$\int_0^T A f(s) ds = A \int_0^T f(s) ds$. 
\end{enumerate}
We associate with $A$ the resolvent set $\rho(A)$ as the set of complex numbers $\lambda$ for which $\lambda I -A$ has bounded inverse 
$$R(\lambda, A):= (\lambda I -A)^{-1} \in L(H)$$
and we call $R(\lambda, A), \, \lambda \in \rho(A)$ the resolvent of $A$.\\

\noindent We note that $R(\lambda,A):\,H \to {\cal D}(A) $ is one- to -one, i.e. 
\begin {eqnarray*}
&(\lambda I -A) R(\lambda, A) x  = x, \quad x \in H \\ and & \, R(\lambda, A) (\lambda I -A) x = x, \quad x \in {\cal D}(A),   \\{giving} &\, A R(\lambda, A) x  =  R(\lambda, A) A x , \quad x \in {\cal D}(A) 
\end {eqnarray*}

\noindent Remark that $R(\lambda_1,A) R(\lambda_2, A)$$=R(\lambda_2,A) R(\lambda_1, A)$ for $\lambda_1$, $\lambda_2$ $\in {\cal D}(A)$.

 \begin{lemma} \label{lemma -resolvent} Let $\{S_t\}$ be $C_0$ -semigroup with infinitesimal generator $A$. Let 
$$
\alpha_0:= \lim_{t \to \infty} t^{-1} ln(\|S_t\|_{{ L}(H)}), $$
 then any real number $\lambda > \alpha_0$ belongs to the resolvent set $\rho(A)$ and 
 $$ 
 R(\lambda,A) x=\int_0^\infty e^{-\lambda t} S_t x dt \quad x \in E
 $$
 In addition for $x \in H$ 
 $$ \lim_{\lambda \to \infty} \|\lambda R(\lambda,A) x - x\|_H=0$$
\end{lemma}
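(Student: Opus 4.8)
The plan is to establish the three assertions in the order they are stated, using the representation $R(\lambda,A)x = \int_0^\infty e^{-\lambda t}S_t x\,dt$ as the central object. First I would fix a real $\lambda > \alpha_0$ and show that the improper integral $R_\lambda x := \int_0^\infty e^{-\lambda t}S_t x\,dt$ converges in $H$ for every $x\in H$ and defines a bounded operator. By definition of $\alpha_0$, for any $\e$ with $\alpha_0 < \lambda - \e$ there is $M_\e$ with $\|S_t\|_{L(H)} \le M_\e e^{(\alpha_0+\e)t}$ for all $t\ge 0$; hence $\|e^{-\lambda t}S_t x\|_H \le M_\e e^{-(\lambda-\alpha_0-\e)t}\|x\|_H$, which is integrable, so $R_\lambda \in L(H)$ with $\|R_\lambda\|_{L(H)} \le M_\e/(\lambda-\alpha_0-\e)$. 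Strong continuity of $\{S_t\}$ makes $t\mapsto S_t x$ continuous, so the integrand is Bochner integrable on $[0,\infty)$.

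Next I would verify that $R_\lambda x \in \mathcal{D}(A)$ and $(\lambda I - A)R_\lambda x = x$, which simultaneously shows $\lambda\in\rho(A)$ and identifies $R(\lambda,A) = R_\lambda$. The computation is the standard semigroup one: for $h>0$,
\begin{eqnarray*}
\frac{S_h - I}{h}\,R_\lambda x
&=& \frac{1}{h}\int_0^\infty e^{-\lambda t}(S_{t+h} - S_t)x\,dt
\\ &=& \frac{e^{\lambda h}-1}{h}\int_h^\infty e^{-\lambda s}S_s x\,ds
 - \frac{1}{h}\int_0^h e^{-\lambda t}S_t x\,dt.
\end{eqnarray*}
Letting $h\to 0^+$, the first term tends to $\lambda R_\lambda x$ (using absolute convergence of the integral so that $\int_h^\infty \to \int_0^\infty$) and the second term tends to $S_0 x = x$ by fact (1)/strong continuity; hence the limit defining $A R_\lambda x$ exists and equals $\lambda R_\lambda x - x$, i.e. $R_\lambda x\in\mathcal{D}(A)$ and $(\lambda I - A)R_\lambda x = x$. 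For the other side, for $x\in\mathcal{D}(A)$ one uses fact (2) to move $A$ inside the integral, getting $R_\lambda(\lambda I - A)x = x$; since $\mathcal{D}(A)$ is dense and $A$ is closed, this pins down that $\lambda I - A$ is a bijection with bounded inverse $R_\lambda$, so $\lambda\in\rho(A)$ and $R(\lambda,A)=R_\lambda$.

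Finally, for the approximation-of-identity statement, I would write $\lambda R(\lambda,A)x - x = \int_0^\infty \lambda e^{-\lambda t}(S_t x - x)\,dt$, using that $\int_0^\infty \lambda e^{-\lambda t}\,dt = 1$. Given $\delta>0$, by strong continuity pick $\tau>0$ with $\|S_t x - x\|_H < \delta$ for $t\in[0,\tau]$; split the integral at $\tau$. The part over $[0,\tau]$ is bounded by $\delta\int_0^\infty\lambda e^{-\lambda t}\,dt = \delta$; the part over $(\tau,\infty)$ is bounded by $\int_\tau^\infty \lambda e^{-\lambda t}(\|S_t x\|_H + \|x\|_H)\,dt \le \int_\tau^\infty \lambda e^{-\lambda t}(M_\e e^{(\alpha_0+\e)t}+1)\|x\|_H\,dt$, which tends to $0$ as $\lambda\to\infty$ for fixed $\tau$. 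Hence $\limsup_{\lambda\to\infty}\|\lambda R(\lambda,A)x - x\|_H \le \delta$, and $\delta$ is arbitrary.

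The main obstacle is the passage to the limit in the second step: one must justify interchanging $h\to 0^+$ with the improper (infinite-horizon) integral, which is exactly where the exponential decay estimate $\|e^{-\lambda t}S_t\|_{L(H)}\le M_\e e^{-(\lambda-\alpha_0-\e)t}$ from the first step is essential — it provides the dominating function that both makes $R_\lambda$ well-defined and licenses the dominated-convergence argument. Everything else is bookkeeping with the semigroup laws (i)–(iii) and facts (1)–(5) already recorded above.
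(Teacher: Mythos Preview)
The paper does not actually supply a proof of this lemma; it is stated as a standard semigroup fact alongside the Hille--Yosida theorem, with an implicit reference to \cite{Pazy}. Your argument is correct and is precisely the classical proof one finds in Pazy: the exponential bound from the growth abscissa $\alpha_0$ gives Bochner integrability of $t\mapsto e^{-\lambda t}S_t x$, the difference-quotient computation identifies $R_\lambda$ as a right inverse of $\lambda I-A$, commuting $A$ with the integral via fact~(2) gives the left inverse on $\mathcal{D}(A)$, and the final splitting at a small $\tau$ handles the approximation of the identity. One minor remark: in the second step you do not actually need density of $\mathcal{D}(A)$ or closedness of $A$ to conclude bijectivity---the two identities $(\lambda I-A)R_\lambda=I$ on $H$ and $R_\lambda(\lambda I-A)=I$ on $\mathcal{D}(A)$ already force $\lambda I-A:\mathcal{D}(A)\to H$ to be a bijection with bounded inverse $R_\lambda$.
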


\begin{theorem} \label{theorem -Hille -Yosida} {\bf Hille - Yosida Theorem} \index{Hille - Yosida Theorem} 
Let $A:\, {\cal D}(A) \subset H \to H$ be a linear operator on a Hilbert space $H$. Necessary and sufficient conditions for $A$ to generate a $C_0$ -semigroup is 
\begin {enumerate}
\item [(1)] \quad $A$ is closed and $\overline {{\cal D}(A)}$$=H$
\item [(2)]\quad There exists $\alpha$, $M$ $\in \mathbb{R}$ such that for $\lambda > \alpha$, $\lambda \in $ $\rho(A)$ 
$$ \|R(\lambda,A)^r\|_{{ L}(H)} \leq M (\lambda - \alpha)^{-r}, \quad r=1,2,...$$
\end {enumerate}
In this case $\|S_t\|_{{ L}(H)} \leq M e^{\alpha t}$, $t \geq 0$.
\end{theorem}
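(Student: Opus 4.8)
The plan is to establish the two implications separately, the sufficiency direction being the substantial one.

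\emph{Necessity.} Assume $A$ generates a $C_0$-semigroup $\{S_t\}$. That $A$ is closed with $\overline{\mathcal{D}(A)}=H$ is exactly item (4) in the list of facts recorded above, and by the definition of a $C_0$-semigroup there are $M\ge 1$ and $\alpha\ge\alpha_0$ with $\|S_t\|_{L(H)}\le Me^{\alpha t}$. By Lemma \ref{lemma -resolvent}, every $\lambda>\alpha$ lies in $\rho(A)$ and $R(\lambda,A)x=\int_0^\infty e^{-\lambda t}S_tx\,dt$. I would differentiate this identity $r-1$ times in $\lambda$ (legitimate by dominated convergence, using $\|S_t\|\le Me^{\alpha t}$) to get $R(\lambda,A)^rx=\tfrac{1}{(r-1)!}\int_0^\infty t^{r-1}e^{-\lambda t}S_tx\,dt$; taking norms and evaluating $\int_0^\infty t^{r-1}e^{-(\lambda-\alpha)t}\,dt=(r-1)!\,(\lambda-\alpha)^{-r}$ yields $\|R(\lambda,A)^r\|_{L(H)}\le M(\lambda-\alpha)^{-r}$ for every $r\ge 1$.

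\emph{Sufficiency: construction of the semigroup.} Assume (1)--(2); the tool is the Yosida approximation. For $\lambda>\alpha$ put $A_\lambda:=\lambda AR(\lambda,A)=\lambda^2R(\lambda,A)-\lambda I\in L(H)$. First I would show $\lambda R(\lambda,A)x\to x$ as $\lambda\to\infty$ for every $x\in H$: for $x\in\mathcal{D}(A)$ one has $\lambda R(\lambda,A)x-x=R(\lambda,A)Ax$ with $\|R(\lambda,A)Ax\|\le M(\lambda-\alpha)^{-1}\|Ax\|\to 0$, and the general case follows from density of $\mathcal{D}(A)$ together with the uniform bound $\|\lambda R(\lambda,A)\|_{L(H)}\le M\lambda/(\lambda-\alpha)$ (estimate (2) with $r=1$). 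Consequently $A_\lambda x=\lambda R(\lambda,A)Ax\to Ax$ for $x\in\mathcal{D}(A)$.

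\emph{Sufficiency: estimates, limit, and identification of the generator.} Write $e^{tA_\lambda}=e^{-\lambda t}\sum_{k\ge 0}(t\lambda^2)^kR(\lambda,A)^k/k!$. Estimate (2) gives
\[
\|e^{tA_\lambda}\|_{L(H)}\le Me^{-\lambda t}\exp\!\Big(\tfrac{t\lambda^2}{\lambda-\alpha}\Big)=M\exp\!\Big(\tfrac{t\lambda\alpha}{\lambda-\alpha}\Big),
\]
and since $\lambda\alpha/(\lambda-\alpha)\to\alpha$ as $\lambda\to\infty$, these bounds are uniform for large $\lambda$ and tend to $Me^{\alpha t}$. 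Using that $A_\lambda,A_\mu$ and all the $e^{sA_\nu}$ commute, the fundamental theorem of calculus gives, for $x\in\mathcal{D}(A)$, $e^{tA_\lambda}x-e^{tA_\mu}x=\int_0^t e^{(t-s)A_\mu}e^{sA_\lambda}(A_\lambda-A_\mu)x\,ds$, whose norm on $[0,T]$ is at most $C_T\,t\,\|(A_\lambda-A_\mu)x\|\to 0$; hence $\{e^{tA_\lambda}x\}$ is Cauchy as $\lambda\to\infty$, uniformly on compact $t$-intervals. Define $S_tx:=\lim_{\lambda\to\infty}e^{tA_\lambda}x$ on $\mathcal{D}(A)$ and extend to $H$ by density and the uniform bound; the semigroup property and $\|S_t\|_{L(H)}\le Me^{\alpha t}$ pass to the limit, and strong continuity follows from uniform convergence on compacts of the continuous maps $t\mapsto e^{tA_\lambda}x$. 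To see the generator $B$ of $\{S_t\}$ coincides with $A$: letting $\lambda\to\infty$ in $e^{tA_\lambda}x-x=\int_0^t e^{sA_\lambda}A_\lambda x\,ds$ gives $S_tx-x=\int_0^t S_sAx\,ds$ for $x\in\mathcal{D}(A)$, whence $\tfrac1t(S_tx-x)\to Ax$ as $t\to0^+$ and $A\subset B$; since $B$ generates a $C_0$-semigroup, the necessity part supplies a large $\lambda\in\rho(A)\cap\rho(B)$, and then $(\lambda I-B)\mathcal{D}(A)=(\lambda I-A)\mathcal{D}(A)=H=(\lambda I-B)\mathcal{D}(B)$ together with injectivity of $\lambda I-B$ forces $\mathcal{D}(A)=\mathcal{D}(B)$, i.e. $A=B$.

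\emph{Main obstacle.} The delicate steps are all in the sufficiency direction. The uniform-in-$\lambda$ bound on $\|e^{tA_\lambda}\|_{L(H)}$ is precisely where the full family of estimates $\|R(\lambda,A)^r\|\le M(\lambda-\alpha)^{-r}$, $r\ge 1$, is indispensable (using only $r=1$ one inserts $M^k$ into the series and the exponent blows up as $\lambda\to\infty$ when $M>1$), and one must verify carefully the commutation relations underlying the telescoping identity for $e^{tA_\lambda}x-e^{tA_\mu}x$. The other point requiring care is upgrading $A\subset B$ to $A=B$, which is settled by the resolvent surjectivity argument above rather than by any direct computation.
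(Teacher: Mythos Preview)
The paper does not supply a proof of the Hille--Yosida theorem: it is stated as a classical result (alongside the Yosida approximation theorem and the resolvent lemma) and used as background for the SPDE theory that follows, with \cite{Pazy} as the implicit reference. So there is no ``paper's own proof'' to compare against.

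That said, your argument is the standard textbook proof and is correct. The necessity direction is handled cleanly via the Laplace representation of the resolvent from Lemma~\ref{lemma -resolvent} and repeated differentiation in $\lambda$. For sufficiency you correctly construct the semigroup as the strong limit of the Yosida approximants $e^{tA_\lambda}$; your uniform bound $\|e^{tA_\lambda}\|\le M\exp(t\lambda\alpha/(\lambda-\alpha))$, obtained by expanding $e^{t\lambda^2 R(\lambda,A)}$ and invoking the full family of estimates $\|R(\lambda,A)^r\|\le M(\lambda-\alpha)^{-r}$, is exactly the crux, and you rightly flag that the single estimate $r=1$ would not suffice when $M>1$. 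The Cauchy argument via the telescoping identity and the identification $A=B$ through resolvent surjectivity are both standard and correctly executed. One cosmetic point: in the line ``$\lambda\alpha/(\lambda-\alpha)\to\alpha$'' you are tacitly assuming $\alpha\ge 0$; when $\alpha<0$ the limit is still $\alpha$ but the monotonicity/uniformity statement should be phrased as ``$\lambda\alpha/(\lambda-\alpha)$ is bounded above for $\lambda$ large and converges to $\alpha$'', which is all you need.
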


\noindent For $\lambda \in \rho (A)$, consider the family of operators 
$$ R_\lambda:=\lambda R(\lambda,A) .$$
Since the range ${\cal R}(R(\lambda,A))$ of $R(\lambda,A)$ is such that  ${\cal R}(R(\lambda,A)) \subset {\cal D}(A)$, we define the "Yosida approximation" of $A$ by \index{Yosida approximation} 
$$
A_\lambda x = A R_\lambda  x , \quad x \in H
$$
Using  $\lambda (\lambda I -A) R(\lambda, A) =\lambda I$ it is easy to prove 
$$ A_\lambda x = \lambda^2 R(\lambda, A) - \lambda I,\quad A_\lambda\in L(H) $$
\noindent Denote by $S^\lambda_t$ the uniformly continuous semigroup 
$$ S^\lambda_t x = e^{t A_\lambda} x, \quad x \in H $$
Using the commutativity of the resolvent, we get $A_{\lambda_1} A_{\lambda_2} = A_{\lambda_2} A_{\lambda_1}$, and clearly 
$$ A_\lambda S^\lambda_t = S^\lambda_t A_\lambda $$

\begin {theorem} \label {theorem -Yosida -approx} {\bf Yosida approximation} Let $A$ be an infinitesimal generator of a $C_0$ -semigroup $\{S_t\}$ on a Hilbert space $H$. Then 
\begin {enumerate}
\item [a)] $\quad \lim_{\lambda \to \infty} R_\lambda x=x, \quad x\in H$
\item [b)]  $\quad  \lim_{\lambda \to \infty}  A_\lambda x = A x ,\quad  {for}\quad x \in {\cal D} (A)$ 
\item [c)] $\quad \lim_{\lambda \to \infty} S^\lambda_t  x =S_t x, \quad x \in H$
\end {enumerate}
The convergence in c) is uniform on compact subsets of $\mathbb{R}_+$ and 
$$\|S^\lambda_t\|_{{ L}(H)} \leq M exp \left(\frac{t \wedge \alpha}{\lambda -\alpha}\right)$$
with constants $M$, $\alpha$ as in Hille -Yosida Theorem
\end {theorem}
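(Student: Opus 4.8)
The plan is to dispose of (a) and (b) quickly and then concentrate on (c), which is the substantive assertion. For (a), observe that by definition $R_\lambda x=\lambda R(\lambda,A)x$, so $\lim_{\lambda\to\infty}R_\lambda x=x$ for all $x\in H$ is precisely the last conclusion of Lemma~\ref{lemma -resolvent}. For (b), fix $x\in\mathcal{D}(A)$; since $R(\lambda,A)$ maps $H$ into $\mathcal{D}(A)$ and commutes with $A$ on $\mathcal{D}(A)$ (as recorded just before the statement), we get $A_\lambda x=AR_\lambda x=\lambda AR(\lambda,A)x=\lambda R(\lambda,A)Ax=R_\lambda(Ax)$, and applying (a) to the vector $Ax\in H$ yields $A_\lambda x\to Ax$.

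For (c) I would first prove the norm bound. Writing $A_\lambda=\lambda^2R(\lambda,A)-\lambda I$ and using that the two summands commute, $S^\lambda_t=e^{-\lambda t}e^{t\lambda^2 R(\lambda,A)}$; expanding the second factor in its power series and inserting the Hille--Yosida estimate $\|R(\lambda,A)^r\|_{L(H)}\le M(\lambda-\alpha)^{-r}$ from Theorem~\ref{theorem -Hille -Yosida} gives $\|e^{t\lambda^2R(\lambda,A)}\|_{L(H)}\le Me^{t\lambda^2/(\lambda-\alpha)}$, hence
\[
\|S^\lambda_t\|_{L(H)}\le M\exp\!\Big(\frac{t\lambda\alpha}{\lambda-\alpha}\Big),\qquad t\ge 0 .
\]
Since $\lambda\alpha/(\lambda-\alpha)\to\alpha$, for each fixed $T>0$ there are $\lambda_0$ and $C_T<\infty$ with $\|S^\lambda_t\|_{L(H)}\le C_T$ for all $t\in[0,T]$ and $\lambda\ge\lambda_0$, and also $\|S_t\|_{L(H)}\le C_T$ on $[0,T]$ after enlarging $C_T$. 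Next, for $x\in\mathcal{D}(A)$ I would compare $S^\lambda_t x$ with $S_t x$ by the fundamental theorem of calculus. One checks that $A_\lambda$ leaves $\mathcal{D}(A)$ invariant (because $R(\lambda,A)$ does) and commutes with $A$ there, so $S^\lambda_s x\in\mathcal{D}(A)$ and $AS^\lambda_s x=S^\lambda_s Ax$ for $x\in\mathcal{D}(A)$; then, using fact (2) on the generator and $A_\lambda S^\lambda_s=S^\lambda_s A_\lambda$,
\[
\frac{d}{ds}\big(S_{t-s}S^\lambda_s x\big)=-S_{t-s}AS^\lambda_s x+S_{t-s}A_\lambda S^\lambda_s x=S_{t-s}S^\lambda_s(A_\lambda-A)x .
\]
Integrating from $0$ to $t$ and taking norms,
\[
\|S^\lambda_t x-S_t x\|_H\le\int_0^t\|S_{t-s}\|_{L(H)}\|S^\lambda_s\|_{L(H)}\,\|A_\lambda x-Ax\|_H\,ds\le T\,C_T^2\,\|A_\lambda x-Ax\|_H ,
\]
which by (b) tends to $0$ uniformly for $t\in[0,T]$.

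Finally, for arbitrary $x\in H$ and $\varepsilon>0$ I would choose $y\in\mathcal{D}(A)$ with $\|x-y\|_H<\varepsilon$, which is possible since $\mathcal{D}(A)$ is dense (fact (4)), and estimate
\[
\|S^\lambda_t x-S_t x\|_H\le\|S^\lambda_t\|_{L(H)}\|x-y\|_H+\|S^\lambda_t y-S_t y\|_H+\|S_t\|_{L(H)}\|y-x\|_H\le 2C_T\varepsilon+\|S^\lambda_t y-S_t y\|_H ,
\]
so that $\limsup_{\lambda\to\infty}\sup_{t\in[0,T]}\|S^\lambda_t x-S_t x\|_H\le 2C_T\varepsilon$; letting $\varepsilon\downarrow 0$ gives (c) with convergence uniform on $[0,T]$, i.e.\ on compact subsets of $\mathbb{R}_+$. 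I expect the main work to be in part (c): securing the uniform-in-$\lambda$ bound on $\|S^\lambda_t\|_{L(H)}$ (which genuinely needs the full resolvent-power estimate of Hille--Yosida, not merely the first-order bound) and justifying the differentiation identity carefully — in particular that $S^\lambda_s$ preserves $\mathcal{D}(A)$ and commutes with $A$ on it, which follows from closedness of $A$ together with the commutativity relations for the resolvents and Yosida approximants noted before the statement.
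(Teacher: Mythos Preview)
The paper does not actually supply a proof of this theorem: it is a review article, and the Yosida approximation result is stated as a classical fact (implicitly referring to standard references such as \cite{Pazy}) with no argument following the statement. Your proof is therefore not being compared against anything in the paper, but it is the standard textbook argument and is correct as written: part (a) is exactly the last assertion of Lemma~\ref{lemma -resolvent}, part (b) follows from the commutation $AR(\lambda,A)=R(\lambda,A)A$ on $\mathcal{D}(A)$ recorded just before the statement, and your treatment of (c) --- the resolvent-power expansion for the norm bound, the interpolation identity $\frac{d}{ds}S_{t-s}S^\lambda_s x=S_{t-s}S^\lambda_s(A_\lambda-A)x$, and the density argument --- is precisely the classical route.

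Two minor remarks. First, your computed bound $\|S^\lambda_t\|_{L(H)}\le M\exp\!\big(t\lambda\alpha/(\lambda-\alpha)\big)$ is the correct one; the displayed bound in the paper with ``$t\wedge\alpha$'' is evidently a typographical slip (the $\wedge$ should be $\lambda$, and the exponent should read $t\lambda\alpha/(\lambda-\alpha)$). Second, your justification that $S^\lambda_s$ preserves $\mathcal{D}(A)$ and that $AS^\lambda_s x=S^\lambda_s Ax$ for $x\in\mathcal{D}(A)$ via closedness of $A$ is right; an alternative that sidesteps this point is the Cauchy-sequence variant $\frac{d}{ds}S^\mu_{t-s}S^\lambda_s x=S^\mu_{t-s}S^\lambda_s(A_\lambda-A_\mu)x$, which only uses bounded operators and then identifies the limit afterwards, but your direct comparison is equally valid.
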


\smallskip \noindent  We conclude this section by introducing a concept of solution. Let us look at the deterministic problem 
$$
\frac {d u(t)}{dt} =A u(t), \,\,  u(0)=x, \quad x\in H
$$
Here $H$ is a real separable Hilbert space and $A$ is an unbounded operator generating a $C_0$ -semigroup.

\noindent A classical solution $u:\, [0,T] \to H$ of the above equation will require a solution to be continuously differentiable and $u(t)$$\in {\cal D}(A)$. However, 
$$ u^x(t)=S_t x , \quad t\geq 0$$
is considered as a (mild) solution to the equation (\cite {Pazy}, Capt.4). 

\noindent One can consider the non -homogeneous equation 
$$
\frac {d u(t)}{dt} =A u(t) + f(t, u(t)), \,\, u(0)=x, \quad x \in H
$$
then for $f\in L^1([0,T],H)$, Bochner integrable, one can consider the integral equation 
\begin{equation}\label{mild det} u^x(t)=S_t x+\int_0^t S_{t-s} f(s, u(s)) ds \end{equation}
A solution of (\ref{mild det}) is called a "mild solution", if $u\in C([0,T],H)$.\\

\noindent Motivated by the initial work of Sergio Albeverio with Wu and Zhang \cite{AWZ1998}, we continued with Sergio \cite{AMR2009}  and further in  \cite{MRbook} to analyze   mild solutions of stochastic partial differential equations (SPDEs) with Poisson noise on any filtered probability space  $(\Omega, \mathcal {F}, \{\mathcal{F}_t\}_{t \geq 0},\mathbb{P})$, satisfying the usual conditions 
 with values on a  separable Hilbert space $H$.  (For this topic see also the monograph by Peszat and Zabczyk \cite{PZbook} and references there.) Remark that 
the stochastic integral $\int_0^t S_{t-s} f(s,x) q(ds,dx)$, which appears in such SPDEs, is in general not a martingale. However similar to Doob inequalities the following Lemma  holds. 

 \begin{lemma}\label{lemma-conv-S2} [Lemma 5.1.9 \cite{MRbook}]
 Assume  $\{S_t\}_{t \geq 0}$  is pseudo-contractive. Let $q(ds, dx) $ be a compensated Poisson random measure on $\mathbb{R}_+ \times E$, for some Hilbert space $E$, associated to a Poisson random measure $N$ with compensator $dt \otimes \beta(dx)$ on $(\Omega, \mathcal{F}, \{\mathcal{F}_t\}_{t\geq 0}, \mathbb{P})$.
 For each $T \geq 0$ the following statements are valid:

 \begin{enumerate}
 \item There exists a constant $C > 0$ such that for each $f \in {L}_{ad}^2(H)$ we have
 \begin{eqnarray}
 &\mathbb{E} \bigg[ \sup_{t \in [0,T]} \bigg\| \int_0^t \int_E S_{t-s} f(s,x) q(ds,dx) \bigg\|^2 \bigg]  \leq \nonumber \\& C e^{2 \alpha T} \mathbb{E} \bigg[ \int_0^T \int_E
  \| f(s,x) \|^2 \beta(dx)ds \bigg]. \label{Ichikawa-E}
 \end{eqnarray}
 
 \item For all  $f \in {L}_{ad}^2(H)$  and all $\epsilon > 0$ we have
 \begin{eqnarray}
 &\mathbb{P} \bigg[ \sup_{t \in [0,T]} \bigg\| \int_0^t \int_E  S_{t-s} f(s,x) q(ds,dx) \bigg\| > \epsilon \bigg]\\& \leq \frac{4 e^{2 \alpha T}}{\epsilon^2} \mathbb{E} \bigg[ \int_0^T \int_E \| f(s,x) \|^2 \beta(dx)ds \bigg]. \label{Ichikawa-P}
\nonumber  \end{eqnarray}
 where $\int_0^t S_{t-s} f(s,x) q(ds,dx)$ is well defined, if the right side  is finite. $\int_0^t S_{t-s} f(s,x) q(ds,dx)$ is c\`ad\`ag.
 
 \end{enumerate}
 \end{lemma}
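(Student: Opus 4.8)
What takes this statement beyond a plain Doob inequality is that the stochastic convolution $Y_t:=\int_0^t\int_E S_{t-s}f(s,x)\,q(ds,dx)$ is \emph{not} a martingale; only the bare stochastic integral $M_t:=\int_0^t\int_E f(s,x)\,q(ds,dx)$ is. The plan is to dilate $\{S_t\}$ to a unitary group, which exhibits $Y$ as the image, under bounded operators of norm at most one (times a scalar factor $e^{\alpha t}$), of a genuine square-integrable martingale, and then to apply Doob's maximal inequality to that martingale.

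Since $\{S_t\}_{t\ge0}$ is pseudo-contractive, i.e. $\|S_t\|_{L(H)}\le e^{\alpha t}$, the family $T_t:=e^{-\alpha t}S_t$ is a strongly continuous contraction semigroup on the separable Hilbert space $H$. First I would invoke the Sz.-Nagy unitary dilation theorem: there exist a separable Hilbert space $\widehat H\supseteq H$, a strongly continuous \emph{unitary group} $\{U_t\}_{t\in\R}$ on $\widehat H$, and the orthogonal projection $\Pi:\widehat H\to H$, such that $T_t=\Pi\,U_t|_H$ for $t\ge0$; equivalently $S_{t-s}=e^{\alpha(t-s)}\,\Pi\,U_t\,U_{-s}$ on $H$ for $0\le s\le t$, where it is the group property over all of $\R$ that makes $U_{-s}$ meaningful for $0\le s\le T$.

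Next I would set $\widehat M_t:=\int_0^t\int_E e^{-\alpha s}\,U_{-s}f(s,x)\,q(ds,dx)$. Because $U_{-s}$ is unitary and $e^{-\alpha s}\le1$, this $\widehat H$-valued predictable integrand lies in $L^2(\mathbb{P}\otimes\beta\otimes\lambda)$ whenever $f$ does, so $\widehat M$ is a well-defined c\`adl\`ag square-integrable $\widehat H$-valued martingale, and by the It\^{o} isometry $\mathbb{E}\|\widehat M_T\|^2=\mathbb{E}\int_0^T\int_E e^{-2\alpha s}\|f(s,x)\|^2\,\beta(dx)\,ds\le\mathbb{E}\int_0^T\int_E\|f(s,x)\|^2\,\beta(dx)\,ds$. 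Pulling the bounded operators $\Pi$ and $U_t$ — which are constant in the integration variables $(s,x)$ — out of the stochastic integral (verified first on simple integrands, then extended by the It\^{o} isometry) gives the representation $Y_t=e^{\alpha t}\,\Pi\,U_t\,\widehat M_t$ for $t\in[0,T]$. This identity simultaneously establishes that $Y_t$ is well defined whenever the right-hand side of \eqref{Ichikawa-E} is finite and that $t\mapsto Y_t$ is c\`adl\`ag (strong continuity of $\{U_t\}$, $\|U_t\|_{L(\widehat H)}=1$, and right-continuity with left limits of $\widehat M$); and since $\|\Pi\|_{L(\widehat H)}\le1$ it yields the pathwise domination $\sup_{t\in[0,T]}\|Y_t\|\le e^{\alpha T}\sup_{t\in[0,T]}\|\widehat M_t\|$. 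Applying Doob's $L^2$ maximal inequality to the submartingale $t\mapsto\|\widehat M_t\|^2$ now produces \eqref{Ichikawa-E} (with $C=4$), and \eqref{Ichikawa-P} follows from \eqref{Ichikawa-E} by Chebyshev's inequality.

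I expect the substantive step to be the first one — realising that a unitary dilation is the correct device for converting the non-martingale $Y$ into a martingale; once that is in place the remainder is routine, the only point requiring care being the operator pull-out inside the stochastic integral. A fallback that avoids dilation is to work with the Yosida approximants $S^\lambda_t=e^{tA_\lambda}$ of Theorem~\ref{theorem -Yosida -approx}: a stochastic Fubini argument shows that $Y^\lambda_t:=\int_0^t\int_E S^\lambda_{t-s}f(s,x)\,q(ds,dx)$ solves $dY^\lambda_t=A_\lambda Y^\lambda_t\,dt+\int_E f(t,x)\,q(dt,dx)$, so the It\^{o} formula of Theorem~\ref{thm-Ito} applied to $y\mapsto\|y\|^2$, the dissipativity estimate $\langle A_\lambda x,x\rangle\le\frac{\alpha\lambda}{\lambda-\alpha}\|x\|^2$ coming from Lemma~\ref{lemma -resolvent}, and Gronwall's lemma give the estimate for $Y^\lambda$, after which one passes to the limit $\lambda\to\infty$ using Theorem~\ref{theorem -Yosida -approx}(c) together with dominated convergence; that route, however, needs an additional approximation of $f$ by simple or bounded integrands, because Theorem~\ref{thm-Ito} applied to $\|\cdot\|^2$ presupposes $\int_0^T\int_E\|f\|^4\,\nu<\infty$, which $f\in {L}_{ad}^2(H)$ does not guarantee.
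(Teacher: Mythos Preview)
The paper itself does not prove this lemma; it is quoted as Lemma~5.1.9 of \cite{MRbook} and left without argument, so there is no in-text proof to compare against directly. That said, your argument is correct, and the Sz.-Nagy dilation route you take is precisely the standard device used in \cite{MRbook} (after Kotelenez for the Wiener case and Hausenblas--Seidler for Poisson integrators): dilate the contraction $e^{-\alpha t}S_t$ to a unitary group $\{U_t\}$ on $\widehat H\supseteq H$, write the convolution as $Y_t=e^{\alpha t}\Pi U_t\widehat M_t$ with $\widehat M$ a genuine square-integrable $\widehat H$-valued martingale, dominate pathwise by $e^{\alpha T}\sup_{t\le T}\|\widehat M_t\|$, and apply Doob's $L^2$ maximal inequality followed by Chebyshev. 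The constant $C=4$ is the Doob constant.

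Two small points worth making explicit in a final write-up: (i) the dilation space $\widehat H$ can be chosen separable (the minimal dilation of a $C_0$-contraction semigroup on a separable space is separable), so the $\widehat H$-valued stochastic integration theory of Section~2 applies; (ii) progressive measurability of $(s,x,\omega)\mapsto e^{-\alpha s}U_{-s}f(s,x,\omega)$ follows from strong continuity of $s\mapsto U_{-s}$ together with the assumed measurability of $f$. Your fallback via Yosida approximants and the It\^{o} formula for $\|\cdot\|^2$ is also a legitimate alternative, and your caveat that Theorem~\ref{thm-Ito} with $\mathcal H(y)=\|y\|^2$ requires the extra $L^4$ integrability (hence a further approximation of $f$) is exactly why the dilation argument is the cleaner choice here.
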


\medskip \noindent Let us assume that we are given
 
 \smallskip \noindent
 \begin{equation} 
 F:\, H \,\to \,H\,,\end {equation}

 \begin{equation} 
 f:\, H  \times H \,\to \,H\,,
 \end{equation}

 \smallskip \noindent Assume
 
 \smallskip \noindent A)\quad $ \,f(u,z)\,$ is jointly
 measurable,
 
 \smallskip \noindent B)\quad $ \,F(z)\,$ is measurable,

  \smallskip \noindent C) $\,$ there exist  constants $L_f$ and $L_F>0$, s.th.
 
 \begin {eqnarray*} 
 &\|F(z)-F(z')\|^2  \leq L_F \|z -z'\|^2 \\& \int_H\|f(u,z) -f(u,z')\|^2 \beta(du)
 \leq L_f \|z -z'\|^2  \\&
  for \quad all \quad z,\,z'\in H\,
 \end {eqnarray*}

\smallskip \noindent D) 
\begin{equation} 
\int_H \|f(u,0)\|^2 \beta(du) < \infty
\end{equation}

\smallskip \noindent  E) $\quad$ 
 $\,{ A}\,$ is the infinitesimal generator of a pseudo
- contraction semigroup $\,\{S_t\}_{t \in [0,T]}\,$.

\medskip \noindent Remark that Assumptions C) and  D) imply that there is a constant $K>0$ such that 
\begin{equation}
\int_H \|f(u,z)\|^2 \beta(du)\leq K  (1 +\|z \|^2)< \infty, 
\end{equation}

\noindent since

 \begin {eqnarray*} 
 &\int_H \|f(u,z)\|^2 \beta(du)\leq 2 \int_H \|f(u,z)-f(u,0)\|^2 \beta(du)+2 \int_H \|f(u,0)\|^2 \beta(du)\\& \leq 
2 max\{ L_f, \int_H \|f(u,0)\|^2 \beta(du) \} (1 +\|z \|^2)< \infty
\end{eqnarray*}

\medskip \noindent In  Albeverio et al. \cite{AMR2009} and  \cite{MRbook},  we analyzed  (in more generality than in Theorem \ref{theorem Markov-SPDE} below) the existence and uniqueness of mild solutions of the stochastic differential equation on intervals $[0,T]$, $T>0$, like e.g. 

\begin{eqnarray} \label{SPDE-Hilbert}
dX_t  &=&  ( A X_t + F(X_t )) dt + \int_{H} f(u,X_t) q(dt,du) \\ X_0 &=&  \xi. 
\end{eqnarray}

\noindent where $q(dt,du):=N(dt,du) - dt \beta(du)$ is a compensated Poisson random measure with compensator $\nu(dt, du):=  dt \beta(du)$.

\noindent In other words, we looked  at the solution of the integral equation
\begin {equation} \label {mild SPDE}
X_t =  S_t X_0 + \int_0^t S_{t-s} F(X_s) ds + \int_0^t \int_{H} S_{t-s} f(u,X_s) q(ds,du) 
\end {equation}
where integrals on the r.h.s. are well defined \cite{MRbook}.

\begin{definition}\label{imlddf1}
A stochastic process ${X_\cdot}$ is called a mild solution of (\ref{SPDE-Hilbert}), if for all $t\leq T$ \\
(i) $X_t$ is $\mathcal{F}_{t}$-adapted on a filtered probability space ($\Omega,\mathcal{F},\left\{\mathcal{F}_{t}\right\}_{t\leq T},\mathbb{P}$), \\
(ii) $\left\{X_t,t\geq0\right\}$ is jointly measurable and $\int_{0}^{T}E\left\|X_t\right\|_{H}^{2}dt<\infty$, \\
(iii) $X_\cdot$ satisfies   (\ref {mild SPDE}) $\mathbb{P}$ -a.s. on $[0,T]$.

\end{definition}

\begin{definition}\label{imlddf2}
A stochastic process ${X_\cdot}$ is called a strong solution of  (\ref{SPDE-Hilbert}), if for all $t\leq T$\\
(i) $X_t$ is $\mathcal{F}_{t}$-adapted on a filtered probability space ($\Omega,\mathcal{F},\left\{\mathcal{F}_{t}\right\}_{t\leq T},\mathbb{P}$), \\
(ii) $X_\cdot $ is c\`{a}dl\`{a}g with probability one,\\
(iii) $X_t\in\mathcal{D}(A)$, $dt\otimes d\mathbb{P}$ a.e., $\int_{0}^{T}\left\|AX_t\right\|_{H}dt<\infty$  $\,\mathbb{P}$ -a.s.,\\
(iv) $X_\cdot$ satisfies   (\ref{SPDE-Hilbert}) $\mathbb{P}$ -a.s. on $[0,T]$.
\end{definition}

\noindent Obviously, a strong solution $X_\cdot $  of (\ref{SPDE-Hilbert}) is a a mild solution  of (\ref{SPDE-Hilbert}). The contrary is not neccesserily true, since e.g.  $X_t\in\mathcal{D}(A)$ might not be true. (See e.g. Section 2.2 in Albeverio et al. \cite{AGMRS2017} where  sufficient conditions for a mild solution $X_\cdot$ of  (\ref{SPDE-Hilbert}) are listed, for  $X_\cdot$ to be also a strong solution.)

 \medskip \noindent
 Let $S_T^2$  be the linear space of all c\`{a}dl\`{a}g, adapted processes $X_\cdot$ such that
 \begin{equation}
 \mathbb{E} \bigg[ \sup_{t \in [0,T]} \| X_t \|_F^2 \bigg] < \infty,
 \end{equation}
 where we identify processes whose paths coincide almost surely.
 Note that, by the completeness of the filtration, adaptedness does not depend on the choice of the representative.
 
 \begin{lemma}\label{lemma-S2-Banach-space} [Lemma 4.2.1 \cite{MRbook}] 
 The linear space $S_T^2$, equipped with the norm
 \begin{equation}
 \| X_\cdot\|_{S_T^2} = \mathbb{E} \bigg[ \sup_{t \in [0,T]} \| X_t \|^2 \bigg]^{1/2},
 \end{equation}
 is a Banach space.
 \end{lemma}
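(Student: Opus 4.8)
The plan is to verify the two Banach-space axioms for $(S_T^2, \|\cdot\|_{S_T^2})$: that it is a normed linear space, and that it is complete.

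\textbf{Norm axioms.} First I would observe that $\|X_\cdot\|_{S_T^2} = \big(\mathbb{E}[\sup_{t\in[0,T]}\|X_t\|^2]\big)^{1/2}$ is finite on $S_T^2$ by definition, is homogeneous since $\sup_t\|\lambda X_t\|^2 = \lambda^2 \sup_t\|X_t\|^2$, and vanishes iff $\sup_{t\in[0,T]}\|X_t\| = 0$ $\mathbb{P}$-a.s., i.e. iff $X_\cdot$ is indistinguishable from the zero process — which is exactly the null element after the identification of processes with a.s.-coinciding paths. The triangle inequality follows by the pathwise estimate $\sup_t\|X_t+Y_t\| \leq \sup_t\|X_t\| + \sup_t\|Y_t\|$, squaring, taking expectations, and applying Minkowski's inequality in $L^2(\Omega)$ to the random variables $\sup_t\|X_t\|$ and $\sup_t\|Y_t\|$. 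Linearity of the space (closure under addition and scalar multiplication) is immediate from the same pathwise bound, which also shows $\|X_\cdot + Y_\cdot\|_{S_T^2} < \infty$ when both summands are in $S_T^2$.

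\textbf{Completeness.} This is the substantive part. Given a Cauchy sequence $(X^{(n)}_\cdot)_n$ in $S_T^2$, I would pass to a subsequence $(X^{(n_k)}_\cdot)_k$ with $\|X^{(n_{k+1})}_\cdot - X^{(n_k)}_\cdot\|_{S_T^2} \leq 2^{-k}$. Setting $\Xi := \sum_{k\geq 1}\sup_{t\in[0,T]}\|X^{(n_{k+1})}_t - X^{(n_k)}_t\|$, the triangle inequality in $L^2(\Omega)$ gives $\mathbb{E}[\Xi^2]^{1/2} \leq \sum_k 2^{-k} < \infty$, so $\Xi < \infty$ $\mathbb{P}$-a.s. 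On the full-measure event $\{\Xi<\infty\}$ the partial sums $X^{(n_1)}_t + \sum_{j=1}^{k-1}(X^{(n_{j+1})}_t - X^{(n_j)}_t) = X^{(n_k)}_t$ converge uniformly in $t\in[0,T]$ to a limit, which I call $X_t$; on the complementary null set I set $X_\cdot := 0$. Uniform convergence of càdlàg paths yields a càdlàg path, so $X_\cdot$ has càdlàg paths a.s., and it is adapted as an a.s. pointwise (indeed uniform) limit of adapted processes, using completeness of the filtration as the lemma's preamble notes. Finally $\sup_{t\in[0,T]}\|X_t\| \leq \sup_t\|X^{(n_1)}_t\| + \Xi \in L^2(\Omega)$, so $X_\cdot \in S_T^2$, and $\|X^{(n_k)}_\cdot - X_\cdot\|_{S_T^2} \leq \sum_{j\geq k}2^{-j} \to 0$ by dominated convergence (dominating $\sup_t\|X^{(n_k)}_t - X_t\|^2$ by $4\Xi^2$), which together with the Cauchy property upgrades to convergence of the whole sequence $X^{(n)}_\cdot \to X_\cdot$ in $S_T^2$.

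\textbf{Main obstacle.} The delicate point is the interchange of the supremum over the uncountable index set $t\in[0,T]$ with expectations and with the a.s. limit — this is where the càdlàg assumption is essential, since it lets one reduce every pathwise supremum to a supremum over a countable dense set (e.g. $(\mathbb{Q}\cap[0,T])\cup\{T\}$), making all the suprema measurable and the Minkowski/monotone/dominated-convergence manipulations legitimate. A second technical care point is verifying that the candidate limit $X_\cdot$ is genuinely adapted and càdlàg rather than merely a version of such a process; here uniform convergence on $[0,T]$ (not just pointwise) does the work, and the completeness of the filtration absorbs the null set on which convergence fails.
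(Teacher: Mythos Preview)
Your proof is correct and follows the standard route for showing that spaces of c\`adl\`ag adapted processes with a uniform $L^2$-norm are complete: extract a rapidly convergent subsequence, control the telescoping sum pathwise, and use uniform convergence to preserve the c\`adl\`ag property. The paper itself does not prove this lemma at all --- it simply quotes it as Lemma~4.2.1 from \cite{MRbook} --- so there is no in-paper argument to compare against. Your identification of the two genuine technical points (measurability of the uncountable supremum via the c\`adl\`ag property and a countable dense set, and adaptedness of the limit via completeness of the filtration) is exactly what the cited reference relies on as well. One cosmetic remark: the bound $\|X^{(n_k)}_\cdot - X_\cdot\|_{S_T^2} \leq \sum_{j\geq k}2^{-j}$ is really Minkowski's inequality for the tail sum (justified by monotone convergence) rather than dominated convergence, though your dominating function $\Xi^2$ would also do the job.
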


\begin {theorem} \label {theorem Markov-SPDE} [Theorem 5.3.1 \cite{MRbook}]
Suppose assumptions A) -E) are satisfied. Then for $\xi$$\in L^2(\Omega,\mathcal{F}_0,\mathbb{P};H)$  and $T>0$, there exists a unique mild solution $X_\cdot^\xi$ in $ S_T^2$ to (\ref{SPDE-Hilbert}) with initial condition $\xi$, and satisfying  $X_t^\xi$  is $\mathcal{F}_t$ -measurable.
\end {theorem}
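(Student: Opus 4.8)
The plan is to obtain $X^\xi_\cdot$ as the unique fixed point of the integral operator appearing in \eqref{mild SPDE}, via the Banach fixed point theorem on the space $S_T^2$, which is complete by Lemma \ref{lemma-S2-Banach-space}.

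First I would define, for $X_\cdot \in S_T^2$,
$$(\mathcal{K}X)_t := S_t\xi + \int_0^t S_{t-s}F(X_s)\,ds + \int_0^t\int_H S_{t-s}f(u,X_s)\,q(ds,du),\qquad t\in[0,T],$$
and show that $\mathcal{K}$ is well defined and maps $S_T^2$ into itself. Adaptedness and joint measurability of the three terms are routine (the first since $\xi$ is $\mathcal{F}_0$-measurable and $S_t\in L(H)$, the others since they are limits of adapted Riemann, resp. It\^o, sums). Càdlàg regularity follows because $t\mapsto S_t\xi$ is continuous by strong continuity, $t\mapsto\int_0^t S_{t-s}F(X_s)\,ds$ is continuous, and the stochastic convolution is càdlàg by Lemma \ref{lemma-conv-S2}. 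For the $L^2(\sup)$ bound I would use pseudo-contractivity $\|S_t\|_{L(H)}\le 1$ together with: (a) for the drift term, Cauchy--Schwarz in $s$ and the linear growth $\|F(z)\|\le \|F(0)\|+\sqrt{L_F}\,\|z\|$ implied by C); (b) for the stochastic term, the maximal inequality \eqref{Ichikawa-E} of Lemma \ref{lemma-conv-S2} together with the linear growth $\int_H\|f(u,z)\|^2\beta(du)\le K(1+\|z\|^2)$ derived from C) and D) (in particular this finiteness is what makes the stochastic integral well defined in the sense of Lemma \ref{lemma-conv-S2}). This gives $\|\mathcal{K}X\|_{S_T^2}^2 \le c_T\,(1+\|X\|_{S_T^2}^2)<\infty$.

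Next I would establish the contraction estimate. For $X_\cdot,Y_\cdot\in S_T^2$ the term $S_t\xi$ cancels, and the same two tools — Cauchy--Schwarz for the drift increment, using $\|F(z)-F(z')\|^2\le L_F\|z-z'\|^2$, and \eqref{Ichikawa-E} for the stochastic increment, using $\int_H\|f(u,z)-f(u,z')\|^2\beta(du)\le L_f\|z-z'\|^2$ — give, for every $t\le T$,
$$\mathbb{E}\Big[\sup_{r\le t}\|(\mathcal{K}X)_r-(\mathcal{K}Y)_r\|^2\Big]\le C(L_F,L_f)\,(T+1)\int_0^t\mathbb{E}\Big[\sup_{r\le s}\|X_r-Y_r\|^2\Big]\,ds.$$
Iterating this inequality $n$ times produces the factor $[C(T+1)]^n T^n/n!$, which is $<1$ for $n$ large; hence $\mathcal{K}^n$ is a strict contraction on $S_T^2$, so by the Banach fixed point theorem $\mathcal{K}$ has a unique fixed point $X^\xi_\cdot\in S_T^2$. (Equivalently, one replaces $\|\cdot\|_{S_T^2}$ by the equivalent norm $\mathbb{E}[\sup_{r\le T}e^{-\lambda r}\|X_r\|^2]^{1/2}$ and picks $\lambda$ large so that $\mathcal{K}$ itself contracts, or runs the Picard iteration $X^{n+1}=\mathcal{K}X^n$ directly.) The fixed point satisfies \eqref{mild SPDE} $\mathbb{P}$-a.s., is càdlàg, adapted, and in $S_T^2$, hence satisfies $\int_0^T\mathbb{E}\|X_t\|^2\,dt<\infty$; $\mathcal{F}_t$-measurability of $X^\xi_t$ is inherited from the iterates. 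Uniqueness among mild solutions in $S_T^2$ is precisely uniqueness of the fixed point.

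The main obstacle is step (b): the stochastic convolution $\int_0^t\int_H S_{t-s}f(u,X_s)\,q(ds,du)$ is \emph{not} a martingale in $t$, because the semigroup acts differently on the integrand as $t$ varies, so Doob's inequality is not available; the Ichikawa-type maximal inequality \eqref{Ichikawa-E}--\eqref{Ichikawa-P} of Lemma \ref{lemma-conv-S2}, valid precisely because $\{S_t\}$ is pseudo-contractive, is the device that replaces it, and everything else is routine once that is in hand. A minor point deserving care is that the Lipschitz and growth hypotheses C), D) are stated in the $\beta$-integrated form $\int_H\|\cdot\|^2\beta(du)$, so one must keep this integral intact throughout the estimates (feeding it directly into the right-hand side of \eqref{Ichikawa-E}) rather than attempting pointwise-in-$u$ bounds.
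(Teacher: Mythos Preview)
The paper does not supply its own proof of this theorem; it is simply quoted from \cite{MRbook}, Theorem~5.3.1. Your Banach fixed-point argument on $S_T^2$ --- using Lemma~\ref{lemma-conv-S2} for the stochastic convolution, the Lipschitz/growth hypotheses C)--D) in their $\beta$-integrated form, and iteration to obtain the factor $1/n!$ --- is precisely the standard route and is essentially the proof given in the cited reference, so there is nothing to compare against within this paper.

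One small correction: you write ``pseudo-contractivity $\|S_t\|_{L(H)}\le 1$'', but in the paper's terminology pseudo-contraction means $M=1$, i.e.\ $\|S_t\|_{L(H)}\le e^{\alpha t}$ for some $\alpha\ge 0$; the bound $\le 1$ would be the contraction case. This slip is harmless for the argument, since $e^{\alpha T}$ is a fixed constant on $[0,T]$ and indeed the factor $e^{2\alpha T}$ already appears in the maximal inequality \eqref{Ichikawa-E} you invoke; it is absorbed into your constants $c_T$ and $C(L_F,L_f)$.
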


\begin{remark}\label{Remark Feller}
 For  each $\xi, \eta, \in L^2(\Omega, \mathcal{F}_0, \mathbb{P}; H)$,
the corresponding unique solutions $X_\cdot ^\xi$ and $Y_\cdot^\eta$  to (\ref{SPDE-Hilbert}) in Theorem \ref{theorem Markov-SPDE} satisfy
\begin{equation}\label{eq: continuous dependence on initial condition}
 \mathbb{E} \left[ \| X_t - Y_t \|_H^2 \right] \leq C(T)\E\left[ \| \xi-\eta \|_H^2 \right], \quad t \in [0,T].
\end{equation}
for some constant $C(T)$ depending on $T>0$ (See Section 5.7 in \cite{MRbook}).\\
If $X_0 \equiv x \in H$, then   the corresponding solution $X_\cdot ^x$ to (\ref{SPDE-Hilbert})  in Theorem \ref{theorem Markov-SPDE} is Markov (See Section 5.4 in \cite{MRbook}) . Such solution constitutes a Markov process whose transition probabilities $p_t(x,dy) = \P[ X_t^x \in dy]$
are measurable with respect to $x$. By slight abuse of notation we denote by $(p_t)_{t \geq 0}$ its transition semigroup, i.e., for each bounded measurable function $f: H \longrightarrow \R$, $p_tf$ is given by
\begin{equation}\label{transition prob}
 p_tf(x) = \E\left[ f(X_t^x) \right] = \int_{H}f(y)p_t(x,dy), \quad t \geq 0, \ \ x \in H.
\end{equation}
Since due to (\ref{eq: continuous dependence on initial condition})  the solution  dependences continuosly  on the initial condition,   
it can be shown that $p_tf \in C_b(H)$ for each $f \in C_b(H)$,
i.e. the transition semigroup is $C_b$-Feller. 
 \end{remark}

\smallskip \noindent Let $R_{n}=nR(n,A)$, with $n \in \mathbb{N}$, $n\in\rho(A)$, the resolvent set of $A$,
 $R(n,A)=(nI-A)^{-1}$ . The (SPDE) 
\begin{eqnarray} \label{imldeq2}
dX_t  &=&  ( A X_t +R_n F(X_t )) dt + \int_{H} R_nf(u,X_t) q(dt,du) \\ X_0 &=&  R_n \xi(\omega). \nonumber
\end{eqnarray}
obtained by  Yosida Approximation  of  (\ref{SPDE-Hilbert})  has a unique strong solution  $X^{n,\xi}_\cdot$ which approximates its mild solution  $X^{ \xi}_\cdot$  of  (\ref{SPDE-Hilbert}) with initial condition $X^\xi_0=\xi$. The precise statement is given in the following Theorem:

\begin{theorem}\label{imldtm3}
 Suppose assumptions A) -E) are satisfied. Then for $\xi$$\in L^2(\Omega,\mathcal{F}_0,\mathbb{P};H)$ and $T>0$, there exists a unique strong  solution  $X^{n,\xi}_\cdot$ $:=\left\{ X^{n,\xi}_t, \,t\geq0\right\}$ in $ S_T^2$ to (\ref{imldeq2}) with initial condition $\xi$, and satisfying  $X^{n,\xi}_t$  is $\mathcal{F}_t$ -measurable $\,\forall t\geq 0$. Moreover, 

\begin{equation}\label{imldeq5}
\lim_{n\rightarrow\infty}E\bigg[\sup_{0\leq t\leq T}\left\|X^{n,\xi}_t-X^\xi_t\right\|_{H}^{2}\bigg] =0,
\end{equation}
where $X^\xi_\cdot:=$ $\left\{ X^\xi_t, \,t\geq0\right\}$ is the mild solution of equation (\ref{SPDE-Hilbert}) with initial condition $\xi$.
\end{theorem}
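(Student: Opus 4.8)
The plan is to establish the two assertions separately: first the existence, uniqueness and strong--solution property of $X^{n,\xi}_\cdot$, and then the convergence (\ref{imldeq5}), which will use only the mild formulation of both $X^{n,\xi}_\cdot$ and $X^\xi_\cdot$.

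For the first part I would start from the observation that, since $\{S_t\}$ is a pseudo--contraction semigroup, Theorem \ref{theorem -Hille -Yosida} (with $M=1$) gives $\|R(n,A)\|_{L(H)}\le(n-\alpha)^{-1}$, so that $\|R_n\|_{L(H)}\le n/(n-\alpha)$ is bounded by a constant $C_0$ uniformly in all sufficiently large $n\in\rho(A)$. Hence the coefficients $z\mapsto R_nF(z)$ and $(u,z)\mapsto R_nf(u,z)$ again satisfy Assumptions A)--D), now with Lipschitz constants $C_0^2L_F$ and $C_0^2L_f$ and with $\int_H\|R_nf(u,0)\|^2\beta(du)<\infty$, so Theorem \ref{theorem Markov-SPDE} applies to (\ref{imldeq2}) and yields a unique mild solution $X^{n,\xi}_\cdot\in S_T^2$. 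To upgrade it to a strong solution I would use that $R_n$ maps $H$ into $\mathcal{D}(A)$ and that $AR_n=A_n:=n^2R(n,A)-nI$ is a \emph{bounded} operator; therefore in the mild representation
\[
X^{n,\xi}_t=S_tR_n\xi+\int_0^tS_{t-s}R_nF(X^{n,\xi}_s)\,ds+\int_0^t\int_H S_{t-s}R_nf(u,X^{n,\xi}_s)\,q(ds,du)
\]
every summand lies in $\mathcal{D}(A)$, and applying $A$ — which commutes with $S_{t-s}$ and converts $R_n$ into $A_n$ — after pulling $A$ through the Bochner integral (closedness of $A$ together with fact (5) of Section 3) and through the stochastic convolution (a stochastic Fubini argument, as in \cite{MRbook}, Section 5) gives $X^{n,\xi}_t\in\mathcal{D}(A)$, $\int_0^T\|AX^{n,\xi}_t\|\,dt<\infty$ $\P$--a.s., and that $X^{n,\xi}_\cdot$ satisfies (\ref{imldeq2}) in integrated form; uniqueness is then immediate since a strong solution is a mild solution.

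For the convergence I would subtract the mild representations of $X^{n,\xi}_\cdot$ and $X^\xi_\cdot$ and split each coefficient difference as $R_nF(X^{n,\xi}_s)-F(X^\xi_s)=R_n\big(F(X^{n,\xi}_s)-F(X^\xi_s)\big)+(R_n-I)F(X^\xi_s)$, and likewise for $f$. Writing $\phi_n(t):=\E\big[\sup_{r\le t}\|X^{n,\xi}_r-X^\xi_r\|^2\big]$, which is finite since both processes belong to $S_T^2$, and estimating the five resulting terms by means of $\|S_t\|_{L(H)}\le e^{\alpha T}$, $\|R_n\|_{L(H)}\le C_0$, the Lipschitz bounds C), the Cauchy--Schwarz inequality for the drift terms, and Lemma \ref{lemma-conv-S2}(1) for the two stochastic convolutions, I expect to reach, for $t\in[0,T]$,
\[
\phi_n(t)\le\beta_n+\gamma_T\int_0^t\phi_n(s)\,ds,
\]
with $\gamma_T$ a finite constant depending only on $\alpha,T,C_0,L_F,L_f$ (originating from the terms carrying $R_n(F(\cdot)-F(\cdot))$ and $R_n(f(\cdot)-f(\cdot))$), and
\[
\beta_n=C_T\Big(\E\|(R_n-I)\xi\|^2+\E\!\int_0^T\!\|(R_n-I)F(X^\xi_s)\|^2\,ds+\E\!\int_0^T\!\!\int_H\|(R_n-I)f(u,X^\xi_s)\|^2\beta(du)\,ds\Big).
\]
Gronwall's lemma then gives $\phi_n(T)\le\beta_n e^{\gamma_T T}$, so it remains to check $\beta_n\to0$. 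By Theorem \ref{theorem -Yosida -approx}(a), $R_ny\to y$ for every $y\in H$, whence each integrand in $\beta_n$ tends to $0$ pointwise; they are dominated, respectively, by $(C_0+1)^2\|\xi\|^2\in L^1(\P)$, by $(C_0+1)^2\|F(X^\xi_s)\|^2$ with $\E\int_0^T\|F(X^\xi_s)\|^2\,ds<\infty$ (linear growth of $F$ from C) combined with $X^\xi_\cdot\in S_T^2$), and by $(C_0+1)^2\|f(u,X^\xi_s)\|^2$ with $\E\int_0^T\!\int_H\|f(u,X^\xi_s)\|^2\beta(du)\,ds\le K\,\E\int_0^T(1+\|X^\xi_s\|^2)\,ds<\infty$ (the growth bound on $f$ implied by C)+D)); dominated convergence then yields $\beta_n\to0$, hence $\phi_n(T)\to0$, i.e.\ (\ref{imldeq5}).

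The step I expect to be the main obstacle is the strong--solution verification in the first part, and within it the rigorous interchange of the unbounded operator $A$ with the stochastic convolution integral; the membership $X^{n,\xi}_t\in\mathcal{D}(A)$ and the Bochner part are routine via closedness of $A$. Once that is in place, the convergence is a fairly standard Gronwall argument that rests on the maximal inequality of Lemma \ref{lemma-conv-S2} and on the a priori bounds $X^{n,\xi}_\cdot,X^\xi_\cdot\in S_T^2$.
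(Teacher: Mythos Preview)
Your proposal is correct and follows the standard route for this type of result: existence/uniqueness via Theorem~\ref{theorem Markov-SPDE} applied to the regularized coefficients, the upgrade to a strong solution through the fact that $R_n$ maps into $\mathcal{D}(A)$ with $AR_n=A_n\in L(H)$, and the convergence via a splitting-plus-Gronwall argument resting on Lemma~\ref{lemma-conv-S2} together with $R_ny\to y$ and dominated convergence. The paper itself does not give a proof of this theorem but simply refers to Theorem~2.9 of \cite{AGMRS2017}; your argument is precisely the kind of proof one finds there, so there is no genuine methodological difference to comment on.

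One minor remark on the step you flag as the main obstacle: the interchange of $A$ with the stochastic convolution is in fact the easy part here, because $AR_n=A_n$ is \emph{bounded}. You do not need to pull the unbounded $A$ through the integral at all; it suffices to observe that $\int_0^t\int_H S_{t-s}A_n f(u,X^{n,\xi}_s)\,q(ds,du)$ is a perfectly well-defined element of $L^2_{\mathrm{ad}}$ (since $\|A_n\|_{L(H)}<\infty$ and $f$ satisfies C)+D)), and then invoke closedness of $A$ on the level of approximating simple functions. So the obstacle you anticipate is lighter than stated.
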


\noindent For the proof see Theorem 2.9 of  Albeverio et al. \cite{AGMRS2017}.\\

\begin{definition} $X^{n,\xi}_\cdot$ is called ``the Yosida approximation of $X^\xi_\cdot$''.
\end{definition}

\begin{remark}\label {RemarkIto}
 Let $\,{ A}\,$ be  the infinitesimal generator of a pseudo
- contraction semigroup $\,\{S_t\}_{t \in [0,T]}\,$.
  Assume that $X_\cdot$ is a strong solution of (\ref{SPDE-Hilbert}) and   all the hypotheses in Theorem \ref{thm-Ito} are  satisfied. Then the It\^{o} -Formula  holds and can be written in the following way:\\

 \noindent  $\mathbb{P}$--almost surely

\begin{eqnarray*}
&{\mathcal H} (t,X_t) = {\mathcal H} (0,X_0) + \int_0^t \partial_s {\mathcal H} (s,X_{s}) ds+ \label{ItoLyapunov} \\& \int_0^t \mathcal{L}\mathcal{H}(s,X_s) ds + \int_0^t \int_A \big( {\mathcal H} (s,X_{s-} + f(s,u)) - {\mathcal H} (s,X_{s-}) \big) q(ds,du)  \end{eqnarray*}
with

\begin{eqnarray} \label{Lyapunov}  &\mathcal{L}\mathcal{H}(s,x):=<\partial_x {\mathcal H} (s,x) , Ax+F(x)>+ \\& \int_H \big( {\mathcal H} (s,x+ f(s,u))- {\mathcal H} (s,x)- <\partial_x {\mathcal H} (s,x),f(s,u)> \big) \beta(du) \nonumber
 \end{eqnarray}
\end{remark}

\begin{remark}\label {RemarkItoYosidaApp} Assume that hypotheses A)-E) and   all  hypotheses a) and b)  in Theorem \ref{thm-Ito} are  satisfied. Then the It\^{o} -Formula  for the Yosida approximation $X_\cdot^{n,\xi}$ of the mild solution  $X_\cdot^{\xi}$ of (\ref{SPDE-Hilbert}) holds and can be written in the following way:

\begin{eqnarray*}
&{\mathcal H} (t,X^{n,\xi}_t) = {\mathcal H} (0,X^{n,\xi}_0) + \int_0^t \partial_s {\mathcal H} (s,X^{n,\xi}_{s}) ds+ \label{ItoLyapunovYosida} \\& \int_0^t \mathcal{L}_n\mathcal{H}(s,X^{n,\xi}_s)  ds+ \int_0^t \int_A \big( {\mathcal H} (s,X^{n,\xi}_{s-} + R_n f(s,u)) - {\mathcal H} (s,X^{n,\xi}_{s-}) \big) q(ds,du)  \end{eqnarray*}
with

\begin{eqnarray*} &\mathcal{L}_n\mathcal{H}(s,x):=<\partial_x {\mathcal H} (s,x) , Ax+R_nF(x)>+ \\& \int_H \big( {\mathcal H} (s,x +R_n f(s,u))- {\mathcal H} (s,x)- <\partial_x {\mathcal H} (s,x),R_nf(s,u)> \big) \beta(du) \label{LyapunovYosida} 
 \end{eqnarray*}

\noindent This follows directly from Theorem \ref{imldtm3} and Remark \ref {RemarkIto}.
\end{remark}

\smallskip \noindent In the next Section we will use the following result, which was obtained in \cite{AGMRS2017} as a consequence of an It\^o -formula for mild solutions of SPDEs, introduced in  Albeverio et al.  \cite{AGMRS2017}  and  written in terms of Yosida approximation

\begin{theorem}\label{Corollary Yosida app}[Corollary 3.7. \cite{AGMRS2017} ]
Assume conditions A)- E)  and  all the hypotheses in Theorem \ref{thm-Ito} are satisfied. Then 
\begin{equation} \lim_{n \to \infty} |\mathcal{L}\mathcal{H}(s,X^{n,\xi}_s)-  \mathcal{L}_n\mathcal{H}(s,X^{n,\xi}_s)| \quad P -a.s.  \end{equation} 
\end{theorem}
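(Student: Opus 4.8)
The plan is to write out $\mathcal{L}\mathcal{H}(s,x)-\mathcal{L}_n\mathcal{H}(s,x)$ from (\ref{Lyapunov}) and the corresponding display in Remark \ref{RemarkItoYosidaApp}, and then to evaluate both at the common point $x=X^{n,\xi}_s$. The crucial observation is that the only contribution to $\mathcal{L}\mathcal{H}$ and to $\mathcal{L}_n\mathcal{H}$ involving the unbounded operator $A$, namely $\langle\partial_x\mathcal{H}(s,x),Ax\rangle$, is literally the same term in both operators (and it is meaningful here because, by Theorem \ref{imldtm3}, $X^{n,\xi}_\cdot$ is a strong solution, so $X^{n,\xi}_s\in\mathcal{D}(A)$); hence it cancels in the difference. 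What survives is
$$
\mathcal{L}\mathcal{H}(s,X^{n,\xi}_s)-\mathcal{L}_n\mathcal{H}(s,X^{n,\xi}_s)=\big\langle\partial_x\mathcal{H}(s,X^{n,\xi}_s),(I-R_n)F(X^{n,\xi}_s)\big\rangle+\int_H\Delta_n(s,u)\,\beta(du),
$$
where $\Delta_n(s,u)$ denotes the difference of the second order Taylor remainders of $\mathcal{H}(s,\cdot)$ at $X^{n,\xi}_s$ in the directions $f(s,u)$ and $R_nf(s,u)$. It then suffices to prove that both summands tend to $0$ $\mathbb{P}$-a.s.; throughout one works along a subsequence on which (\ref{imldeq5}) holds in the stronger form $\sup_{t\le T}\|X^{n,\xi}_t-X^\xi_t\|\to 0$ $\mathbb{P}$-a.s., so that in particular $\sup_n\|X^{n,\xi}_s\|<\infty$ $\mathbb{P}$-a.s.

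For the drift summand, write $(I-R_n)F(X^{n,\xi}_s)=(I-R_n)\big(F(X^{n,\xi}_s)-F(X^\xi_s)\big)+(I-R_n)F(X^\xi_s)$. By the Hille--Yosida Theorem \ref{theorem -Hille -Yosida} and assumption E) one has $\|R_n\|_{L(H)}\le n/(n-\alpha)\le 2$ for $n$ large, so the Lipschitz bound in C) makes the first piece $O(\|X^{n,\xi}_s-X^\xi_s\|)\to 0$; the second piece tends to $0$ because $R_n\to I$ strongly on $H$ (Theorem \ref{theorem -Yosida -approx} a)). Since $\|\partial_x\mathcal{H}(s,y)\|\le h_1(\|y\|)$ and $\|X^{n,\xi}_s\|$ is a.s. bounded in $n$ along the chosen subsequence, pairing with $\partial_x\mathcal{H}(s,X^{n,\xi}_s)$ preserves convergence to $0$.

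For the jump summand one uses dominated convergence on $(H,\beta)$. Pointwise in $u$, the mean value theorem gives $|\Delta_n(s,u)|\le\big(h_1(\|X^{n,\xi}_s\|+2\|f(s,u)\|)+h_1(\|X^{n,\xi}_s\|)\big)\,\|(I-R_n)f(s,u)\|$, which tends to $0$ since $R_nf(s,u)\to f(s,u)$ and $\|X^{n,\xi}_s\|$ stays bounded. For the $n$-uniform majorant, the mean value form of the second order Taylor estimate, together with quasi-sublinearity of $h_2$, yields for $v\in H$
$$
\big|\mathcal{H}(s,x+v)-\mathcal{H}(s,x)-\langle\partial_x\mathcal{H}(s,x),v\rangle\big|\le\tfrac12\,h_2\big(\|x\|+\|v\|\big)\,\|v\|^2\le C\big(h_2(\|x\|)+h_2(\|v\|)\big)\,\|v\|^2 ;
$$
applying this with $v=f(s,u)$ and with $v=R_nf(s,u)$ (using $\|R_nf(s,u)\|\le 2\|f(s,u)\|$ and $h_2(2\cdot)\le Ch_2(2)h_2(\cdot)$) gives $|\Delta_n(s,u)|\le C'\big(h_2(\|X^{n,\xi}_s\|)+h_2(\|f(s,u)\|)\big)\|f(s,u)\|^2$, where $h_2(\|X^{n,\xi}_s\|)$ is bounded by a finite random constant along the subsequence. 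Hypothesis b) of Theorem \ref{thm-Ito} forces $\int_H\big(\|f(s,u)\|^2+h_2(\|f(s,u)\|)\|f(s,u)\|^2\big)\beta(du)<\infty$ for a.e. $s$, so the majorant is $\beta$-integrable and dominated convergence gives $\int_H\Delta_n(s,u)\beta(du)\to 0$. Combining the two summands yields the claim.

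The step I expect to be the main obstacle is handling the $n$-dependence of the evaluation point $X^{n,\xi}_s$: the pointwise limit, the majorant in the dominated convergence argument, and the Lipschitz estimate in the drift term all have to be uniform in $n$, which is why one cannot argue at a fixed point and must route everything through the subsequence on which $\sup_{t\le T}\|X^{n,\xi}_t-X^\xi_t\|\to 0$ $\mathbb{P}$-a.s. (giving $\sup_n\|X^{n,\xi}_s\|<\infty$ $\mathbb{P}$-a.s.); along this subsequence the convergence is $\mathbb{P}$-a.s., and it is this reduction, together with the bookkeeping of the quasi-sublinear bounds supplied by hypothesis b), that does the real work.
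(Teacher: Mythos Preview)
The paper does not supply its own proof of this theorem; it merely records the statement and cites Corollary~3.7 of \cite{AGMRS2017}. So there is no in-paper argument to compare against, and your proposal has to be assessed on its own merits.

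Your overall strategy is the natural one and is essentially what is carried out in \cite{AGMRS2017}: subtract the two generators at the common point $X^{n,\xi}_s\in\mathcal{D}(A)$, observe that the $\langle\partial_x\mathcal{H},A\,\cdot\,\rangle$ contribution cancels, and then split the remainder into a drift piece controlled by $(I-R_n)F$ and a jump piece controlled by the second-order Taylor remainder with increment $(I-R_n)f$. Your use of $\|R_n\|_{L(H)}\le n/(n-\alpha)$ is legitimate precisely because assumption~E) says the semigroup is pseudo-contractive ($M=1$), and your dominated-convergence majorant built from the quasi-sublinear bound on $\partial_{yy}\mathcal{H}$ is exactly the device that hypothesis~b) of Theorem~\ref{thm-Ito} is designed to feed. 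One notational point: in the SPDE context the jump coefficient is $f(u,X_s^{n,\xi})$, not a time-dependent $f(s,u)$; the $\beta$-integrability you need is therefore supplied by conditions~C)--D) (giving $\int_H\|f(u,z)\|^2\beta(du)\le K(1+\|z\|^2)$) together with hypothesis~b), rather than by~b) alone.

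The one genuine soft spot is the subsequence manoeuvre. From \eqref{imldeq5} you only extract a subsequence along which $\sup_{t\le T}\|X^{n,\xi}_t-X^\xi_t\|\to 0$ $\mathbb{P}$-a.s., and your argument as written yields the $\mathbb{P}$-a.s.\ limit only along that subsequence. This does not immediately give the full-sequence $\mathbb{P}$-a.s.\ convergence that the displayed statement suggests. In practice this is harmless: the only place the result is used (the proof of Theorem~\ref{Stable with Lyapunov}) passes to the limit inside an expectation, for which convergence along a subsequence combined with suitable integrability suffices; alternatively, since every subsequence admits a further subsequence with the $\mathbb{P}$-a.s.\ limit $0$, you do get convergence to $0$ in probability for the full sequence. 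You should make explicit which mode of convergence you are actually proving.
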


\section{Some stability properties for solutions of SPDEs on Hilbert spaces } In this Section we discuss how  the  It\^{o} Formula in Theorem \ref{thm-Ito} was applied by Albeverio et al. \cite{AGMRS2017} to  establish  through a Lyapunov function approach stability properties for the mild solution of  (\ref{SPDE-Hilbert}) converging to a unique invariant measure.  \\

\noindent {\bf Assumption} We assume in the whole Section that conditions A) -E) are satisfied.\\

 \noindent The mathematical tools introduced in  \cite{AGMRS2017} have been later extended in  \cite{FFRSch2020} to analyze the limiting behaviour of  mild solutions of SPDEs with multiple invariant measure. This will however not be  discussed here, due to a problem of space.

 \noindent  We start to recall some definition related to the Lyapunov function approach  presented in \cite{MW2011} (see also the PhD thesis of the second author L. Wang) as well as \cite{GMbook}, \cite{MRbook},  \cite{AGMRS2017}.

\begin{definition}\label{defExbddinmeansquare}
We say that the solution of (\ref{SPDE-Hilbert})  is exponentially stable in the mean square sense if there exists $c, \epsilon>0$ such that for all $t>0$ and  $\xi$$\in L^2(\Omega,\mathcal{F}_0,\mathbb{P};H)$   
\begin{equation}\label{ineq exp stable}
\mathbb{E}[\|X_t^\xi \|^2]\leq c e^{-\epsilon t} \mathbb{E}[ \|\xi\|^2]
\end{equation}
\end{definition}

\begin{definition}\label{DefLyapunov}
Let $\mathcal{L}$ be defined as in (\ref{Lyapunov}). A function ${\mathcal{H}}:\, H \to \mathbb{R}$ $\in C^2(H;\mathbb{R})$ is a Lyapunov function for the SPDE  (\ref{SPDE-Hilbert}) if it satisfies the following conditions:
\begin{enumerate}
\item[I.] There exist finite constants $c_1$, $c_2$$>0$ such that for all $ x \in H$
$$c_1 \|x\|^2 \leq {\mathcal{H}}(x) \leq c_2 \|x\|^2$$
\item[II.] There exists a constant $c_3>0$ such that 
$$\mathcal{L}\mathcal{H}(x) \leq - c_3 \mathcal{H}(x) \quad \forall  x \in \mathcal{D}(A)$$
\end{enumerate} 
\end{definition}

\noindent In Albeverio et al. \cite{AGMRS2017}  we proved the following Theorem

\begin{theorem} \label{Stable with Lyapunov}  [\cite {AGMRS2017} ] Assume that there exists a function $\mathcal{H}$ $\in C^2(H;\mathbb{R})$ which is a Lyapunov function for   the SPDE  (\ref{SPDE-Hilbert}) and  the hypotheses a) and b)  in Theorem \ref{thm-Ito} are  satisfied. Then the mild solution of    (\ref{SPDE-Hilbert})  is exponentially stable in the mean square sense. Moreover the constants in (\ref{ineq exp stable}) can be chosen so that   $c=\frac{c_2}{c_1}$ and $\epsilon=c_3$.
\end{theorem}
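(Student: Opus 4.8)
The plan is to carry the Lyapunov estimate out on the \emph{Yosida approximations} $X^{n,\xi}_\cdot$ of Theorem~\ref{imldtm3} rather than on the mild solution directly, and then pass to the limit $n\to\infty$. This detour is forced: condition~II of Definition~\ref{DefLyapunov} controls $\mathcal{L}\mathcal{H}$ only on $\mathcal{D}(A)$, where a mild solution need not take its values, whereas the strong solution $X^{n,\xi}_\cdot$ does by Definition~\ref{imlddf2}.

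First I would fix $T>0$, $t\in[0,T]$, and apply the Itô formula of Remark~\ref{RemarkItoYosidaApp} to the time-dependent function $\widetilde{\mathcal{H}}(s,x):=e^{c_3 s}\mathcal{H}(x)$. Since $e^{c_3 s}\le e^{c_3 T}$ on $[0,T]$ and a positive constant multiple of a quasi-sublinear function is again quasi-sublinear, $\widetilde{\mathcal{H}}$ still satisfies hypotheses a) and b) of Theorem~\ref{thm-Ito}; moreover $\partial_s\widetilde{\mathcal{H}}(s,x)=c_3e^{c_3 s}\mathcal{H}(x)$ and $\mathcal{L}_n$ acts only on the spatial variable, so the formula reads, $\P$--a.s.,
\[
 e^{c_3 t}\mathcal{H}(X^{n,\xi}_t)=\mathcal{H}(R_n\xi)+\int_0^t e^{c_3 s}\big(c_3\mathcal{H}(X^{n,\xi}_s)+\mathcal{L}_n\mathcal{H}(X^{n,\xi}_s)\big)\,ds+M^n_t,
\]
with $M^n_t=\int_0^t\int_A e^{c_3 s}\big(\mathcal{H}(X^{n,\xi}_{s-}+R_nf(u,X^{n,\xi}_{s-}))-\mathcal{H}(X^{n,\xi}_{s-})\big)q(ds,du)$. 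Taking expectations, part~2 of Theorem~\ref{thm-Ito} together with the quasi-sublinear bound on $\partial_y\mathcal{H}$, the uniform bound on the resolvents $R_n$, and the uniform estimate $\sup_n\|X^{n,\xi}_\cdot\|_{S^2_T}<\infty$ make $M^n_\cdot$ a genuine (not merely local) martingale after a standard localization, so $\E[M^n_t]=0$. Writing the drift as $(c_3\mathcal{H}+\mathcal{L}\mathcal{H})(X^{n,\xi}_s)+(\mathcal{L}_n\mathcal{H}-\mathcal{L}\mathcal{H})(X^{n,\xi}_s)$ and using $X^{n,\xi}_s\in\mathcal{D}(A)$ together with condition~II, the first summand is $\le 0$, whence
\[
 e^{c_3 t}\E[\mathcal{H}(X^{n,\xi}_t)]\leq\E[\mathcal{H}(R_n\xi)]+\E\!\left[\int_0^t e^{c_3 s}\big(\mathcal{L}_n\mathcal{H}-\mathcal{L}\mathcal{H}\big)(X^{n,\xi}_s)\,ds\right].
\]

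Next I would let $n\to\infty$. On the left, $X^{n,\xi}_t\to X^\xi_t$ in $S^2_T$ by Theorem~\ref{imldtm3}, and $0\le\mathcal{H}(\cdot)\le c_2\|\cdot\|^2$ (condition~I) gives uniform integrability, so $\E[\mathcal{H}(X^{n,\xi}_t)]\to\E[\mathcal{H}(X^\xi_t)]$; similarly $\E[\mathcal{H}(R_n\xi)]\to\E[\mathcal{H}(\xi)]$ because $R_n\xi\to\xi$, $\mathcal{H}$ is continuous, and $\|R_n\xi\|$ is uniformly dominated. The key step — and the one I expect to be the main obstacle — is that the error term on the right vanishes in the limit: Theorem~\ref{Corollary Yosida app} supplies $(\mathcal{L}_n\mathcal{H}-\mathcal{L}\mathcal{H})(X^{n,\xi}_s)\to 0$ only $dt\otimes\P$--a.e., and this must be upgraded to convergence in $L^1(\Omega\times[0,T])$ by exhibiting an $n$-independent integrable majorant for $|\mathcal{L}_n\mathcal{H}(X^{n,\xi}_s)|$ and $|\mathcal{L}\mathcal{H}(X^{n,\xi}_s)|$; this is precisely where the quasi-sublinear growth of $\partial_x\mathcal{H},\partial_{xx}\mathcal{H}$ in a),b) of Theorem~\ref{thm-Ito}, the Lipschitz and linear-growth conditions C),D) on $f$ and $F$, the uniform operator bound on $R_n$, and $\sup_n\|X^{n,\xi}_\cdot\|_{S^2_T}<\infty$ are all used. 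One thereby obtains $e^{c_3 t}\E[\mathcal{H}(X^\xi_t)]\le\E[\mathcal{H}(\xi)]$.

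Finally, condition~I closes the argument: $c_1\E[\|X^\xi_t\|^2]\le\E[\mathcal{H}(X^\xi_t)]\le e^{-c_3 t}\E[\mathcal{H}(\xi)]\le c_2e^{-c_3 t}\E[\|\xi\|^2]$, hence $\E[\|X^\xi_t\|^2]\le\frac{c_2}{c_1}e^{-c_3 t}\E[\|\xi\|^2]$ for $t\in[0,T]$, and since $T>0$ is arbitrary this holds for all $t>0$. This is \eqref{ineq exp stable} with $c=c_2/c_1$ and $\epsilon=c_3$, as claimed. The only genuinely delicate points are the dominated-convergence argument for the Yosida error term and the true-martingale property of $M^n$, both of which hinge on combining the quasi-sublinearity hypotheses with the uniform second-moment bound on the approximations.
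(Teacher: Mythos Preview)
Your proof is correct and follows essentially the same route as the paper: apply the It\^o formula for $e^{c_3 s}\mathcal{H}$ to the Yosida approximants $X^{n,\xi}$, use condition~II to bound $c_3\mathcal{H}+\mathcal{L}\mathcal{H}\le 0$ on $\mathcal{D}(A)$, pass the remaining error $\mathcal{L}_n\mathcal{H}-\mathcal{L}\mathcal{H}$ to zero via Theorem~\ref{Corollary Yosida app} and Theorem~\ref{imldtm3}, and close with condition~I. If anything, you are more explicit than the paper about the two genuinely delicate points---the true-martingale property of $M^n$ and the dominated-convergence upgrade of the a.s.\ convergence in Theorem~\ref{Corollary Yosida app}---which the paper simply asserts in one line.
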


\noindent Remark that for the case  $\mathcal{H}$ $\in C_b^2(H;\mathbb{R})$ a proof can be found in [\cite{MW2011} Theorem 4.2] (see also [\cite{RZ2006} Section 7.1] and for the Gaussian case [\cite{GMbook} Theorem 6.4]. The results are stated there for the Yosida approximants.

\begin{proof}  
Since all the hypotheses of Theorem \ref{thm-Ito} are satisfied, It\^o formula can be applied to the Yosida approximation.
\begin{equation}
e^{c_3 t} \mathbb{E}[\mathcal{H}(X_t^{n,\xi}) - \mathcal{H}(R_n \xi)] =\mathbb{E}\left[ \int_0^t  e^{c_3 s} c_3 (\mathcal{H}(X_s^{n,\xi})+ \mathcal{L}_n \mathcal{H} (X_s^{n,\xi})) ds\right]
\end{equation}
From condition I it follows 
\begin{equation}
c_3 \mathcal{H}(X_s^{n,\xi})  + \mathcal{L}_n\mathcal{H}(X_s^{n,\xi}) \leq - \mathcal{L}\mathcal{H}(X_s^{n,\xi})+ \mathcal{L}_n\mathcal{H}(X_s^{n,\xi})  
\end{equation}
\begin{equation}
e^{c_3 t} \mathbb{E}[\mathcal{H}(X_t^{n,\xi}) - \mathcal{H}(R_n \xi)] \leq \mathbb{E}\left[ \int_0^t  e^{c_3 s}  (- \mathcal{L}\mathcal{H}(X_s^{n,\xi})+ \mathcal{L}_n \mathcal{H} (X_s^{n,\xi})) ds\right] 
\end{equation}
From Theorem \ref{imldtm3} and Theorem \ref{Corollary Yosida app} it follows $e^{c_3 t} \mathbb{E}[\mathcal{H}(X_t^{\xi})] \leq \mathbb{E}[ \mathcal{H}( \xi)]$. Condition II implies then 
\begin{equation}c_1 \mathbb{E}[\|X_t^{\xi}\|^2] \leq \mathbb{E}[\mathcal{H}(X_t^{\xi})] \leq e^{-c_3 t}\mathbb{E}[ \mathcal{H}( \xi)] \leq c_2  e^{-c_3 t} \mathbb{E}[\|\xi\|^2]
\end{equation}
and hence 
\begin{equation}\mathbb{E}[\|X_t^{\xi}\|^2] \leq \frac{c_2}{c_1} e^{-c_3 t} \mathbb{E}[\|\xi\|^2]
\end{equation}
The statement follows by choosing $c=\frac{c_2}{c_1}$ and $\epsilon=c_3$. 
\end{proof}

\noindent Using Theorem \ref{Stable with Lyapunov} we can provide an easy proof of   the following statement, known in the literature from e.g. [Section 16, \cite{DZbook}] and [Chapter 11, Section 5, \cite{PZbook}].

\begin{theorem}\label{Thdissipativity}
Assume that the conditions A) - E)  are  satisfied  for  (\ref{SPDE-Hilbert}),  and the following conditions hold
\begin{enumerate}
\item[i)] $A$ satisfies the ''dissipativity condition'' , i.e there exists $\alpha$ $>0$ such that 
\begin{eqnarray}
&<Ax-Ay,x-y>+ <F(x) -F(y), x-y>   \nonumber \\&   \leq -\alpha \|x-y\|^2 \quad \forall x,y\in \mathcal{D}(A);
\end{eqnarray}

\item[ii)] $\epsilon:= 2 \alpha -L_f$ $>0$.

\item[iii)] $\forall z \in H$
 $ \int_A \| f(u,z) \|^4 \beta(du)< \infty$
\end{enumerate}

\noindent Then  for all $\xi,\, \eta\, \in L^2(\Omega,\mathcal{F}_0,\mathbb{P};H)$
\begin{equation}\label{stability Difference}
\mathbb{E}[\|X_t^{\xi}-X_t^\eta\|^2] \leq e^{-\epsilon  t} \mathbb{E}[\|\xi -\eta\|^2]  \quad \forall  t>0 
\end{equation}
\end{theorem}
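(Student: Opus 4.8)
The plan is to apply Theorem \ref{Stable with Lyapunov} to the difference process $D_t := X_t^\xi - X_t^\eta$, using $\mathcal H(x) = \|x\|^2$ as the candidate Lyapunov function. First I would observe that $D_\cdot$ is itself (formally) the mild solution of an SPDE of the type \eqref{SPDE-Hilbert}, but driven by the ``difference coefficients'' $\tilde F(x,y) := F(x) - F(y)$ and $\tilde f(u;x,y) := f(u,x) - f(u,y)$ acting on the pair $(X^\xi, X^\eta)$; since $X^\xi$ and $X^\eta$ solve the same equation with the same noise, the stochastic integrals subtract cleanly and the $S_{t-s}$ semigroup factors are shared. Rather than re-deriving an existence theory for this coupled system, the cleaner route is to work directly with the Yosida approximations $X^{n,\xi}_\cdot$, $X^{n,\eta}_\cdot$, which are genuine strong solutions, apply the It\^o formula of Remark \ref{RemarkItoYosidaApp} (or Theorem \ref{thm-Ito}) to $\mathcal H(x)=\|x\|^2$ evaluated along $D^n_t := X^{n,\xi}_t - X^{n,\eta}_t$, and then pass to the limit $n\to\infty$ using \eqref{imldeq5} and Theorem \ref{Corollary Yosida app}.

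The heart of the computation is the generator term. With $\mathcal H(x)=\|x\|^2$ we have $\partial_x\mathcal H(x)v = 2\langle x,v\rangle$, so for the difference process the drift contribution is $2\langle D_s,\, A X^\xi_s - A X^\eta_s + F(X^\xi_s) - F(X^\eta_s)\rangle$, which by the dissipativity condition i) is bounded above by $-2\alpha\|D_s\|^2$. The jump part of $\mathcal L\mathcal H$ for the difference process is $\int_H \big(\|D_s + \tilde f(u)\|^2 - \|D_s\|^2 - 2\langle D_s,\tilde f(u)\rangle\big)\beta(du) = \int_H \|\tilde f(u)\|^2\,\beta(du) = \int_H \|f(u,X^\xi_s) - f(u,X^\eta_s)\|^2\,\beta(du) \le L_f\|D_s\|^2$ by assumption C). Hence $\mathcal L\mathcal H(D_s) \le -(2\alpha - L_f)\|D_s\|^2 = -\epsilon\|D_s\|^2 = -\epsilon\,\mathcal H(D_s)$; that is, conditions I (trivially, with $c_1=c_2=1$) and II (with $c_3=\epsilon$) of Definition \ref{DefLyapunov} hold for the difference SPDE. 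Condition iii) ensures the $\|\cdot\|^4$-integrability needed (via the Remark after Theorem \ref{thm-Ito}) to legitimately apply the It\^o formula to $\|\cdot\|^2$ along the (approximating) solutions, so hypotheses a) and b) of Theorem \ref{thm-Ito} are met.

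Concretely, applying the It\^o formula to $e^{\epsilon t}\|D^n_t\|^2$ and taking expectations kills the $q(ds,du)$ martingale term (here one uses that $X^{n,\xi}, X^{n,\eta}$ lie in $S_T^2$, so the integrand is in $L^2_{ad}$), leaving
\begin{equation*}
e^{\epsilon t}\,\mathbb{E}\big[\|D^n_t\|^2\big] = \mathbb{E}\big[\|R_n(\xi-\eta)\|^2\big] + \mathbb{E}\!\left[\int_0^t e^{\epsilon s}\big(\epsilon\|D^n_s\|^2 + \mathcal L_n\mathcal H(X^{n,\xi}_s) - \mathcal L_n\mathcal H(X^{n,\eta}_s)\big)\,ds\right],
\end{equation*}
where I write $\mathcal L_n\mathcal H(x,y)$ loosely for the Yosida-approximated generator of the difference evaluated at $(x,y)$. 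Writing $\mathcal L_n = \mathcal L + (\mathcal L_n - \mathcal L)$, the $\mathcal L$-part combines with $\epsilon\|D^n_s\|^2$ to give something $\le 0$ by condition II above, while the remainder $\mathcal L_n\mathcal H - \mathcal L\mathcal H$ tends to $0$ $\mathbb P$-a.s. by Theorem \ref{Corollary Yosida app}; together with $\|R_n(\xi-\eta)\|\le \|\xi-\eta\|$ and $R_n(\xi-\eta)\to \xi-\eta$, a dominated-convergence argument as $n\to\infty$ (using \eqref{imldeq5} to identify the limit with $X^\xi-X^\eta$) yields $e^{\epsilon t}\mathbb{E}[\|X^\xi_t - X^\eta_t\|^2] \le \mathbb{E}[\|\xi-\eta\|^2]$, which is \eqref{stability Difference}.

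The main obstacle I anticipate is the limit passage: one must justify uniform-in-$n$ integrability so that the error term $\int_0^t e^{\epsilon s}(\mathcal L_n\mathcal H - \mathcal L\mathcal H)(\cdots)\,ds$ vanishes in expectation, not merely pointwise. This requires a moment bound of the form $\sup_n \mathbb{E}[\sup_{s\le T}\|X^{n,\xi}_s\|^p]<\infty$ for a suitable $p$ (controlling the quartic growth coming from the $\|\cdot\|^4$-term in condition iii)), which follows from the construction of the Yosida approximations together with the linear-growth bound on $f$ recorded after assumption D); one then invokes uniform integrability to upgrade the a.s.\ convergence of Theorem \ref{Corollary Yosida app} to $L^1$ convergence. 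The dissipativity estimate itself and the jump-term identity are elementary once $\mathcal H(x)=\|x\|^2$ is in hand, so the analytic work is concentrated entirely in this approximation/limit step.
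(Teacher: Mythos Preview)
Your proposal is correct and follows essentially the same approach as the paper: identify the difference $X^\xi_\cdot - X^\eta_\cdot$ as the mild solution of the ``difference SPDE'', take $\mathcal H(x,y)=\|x-y\|^2$ as Lyapunov function, compute $\mathcal L\mathcal H$ via the jump-term identity $\|d+\tilde f\|^2-\|d\|^2-2\langle d,\tilde f\rangle=\|\tilde f\|^2$, and use conditions i)--ii) to get $\mathcal L\mathcal H\le -\epsilon\,\mathcal H$ with $c_1=c_2=1$, $c_3=\epsilon$. The only difference is packaging: the paper simply invokes Theorem~\ref{Stable with Lyapunov} once the Lyapunov conditions are checked, whereas you additionally unpack its proof inline (Yosida approximation, It\^o formula for $e^{\epsilon t}\|D^n_t\|^2$, limit $n\to\infty$); the uniform-integrability concern you flag is exactly what is already absorbed into the proof of Theorem~\ref{Stable with Lyapunov} via Theorems~\ref{imldtm3} and~\ref{Corollary Yosida app}, so you may cite that result directly rather than re-deriving it.
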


\begin{proof} The stochastic  process $X_\cdot^\xi - X_\cdot ^\eta$ is the mild solution of 

\begin{eqnarray} \label{SPDE-Difference}
d(X_t^\xi-X_t^\eta)  &=&   A (X_t^\xi-X_t^\eta) dt+ (F(X_t^\xi) - F(X_t^\eta)) dt \nonumber\\& + &\int_{H} (f(u,X_t^\xi) - f(u,X_t^\eta)) q(dt,du) \\ X^\xi_0 - X^\eta_0&=&  \xi -\eta .
\end{eqnarray}

\noindent Condition iii) implies that all hypothesis of Theorem \ref{thm-Ito} are satisfied for $\mathcal{H}(x,y):=\|x-y\|^2$. Moreover, according to   the definition  of $\mathcal{L}$ in (\ref{Lyapunov}), we have 
\begin{eqnarray*} &\mathcal{L}\|x-y\|^2:=2<x-y, A(x-y)> +2 <x-y, F(x) - F(y)> \nonumber \\&  +\int_H \| f(u,x) - f(u,y)\|^2  \beta(du) 
 \end{eqnarray*}
where we used that

\begin{eqnarray*}
&\|x-y + f(u,x)- f(u,y)\|^2-\|x-y\|^2 \\&- 2<x-y,  f(u,x)- f(u,y)>=\|  f(u,x) -  f(u,y)\|^2 
\end{eqnarray*}

\noindent Conditions i) and ii) imply that the function $\mathcal{H}(x,y):=\|x-y\|^2$ is a Lyapunov function for (\ref{SPDE-Difference})  with $c_1=c_2=1$ and $c_3=\epsilon$. Hence  $X_\cdot^{\xi} - X_\cdot ^{\eta}$   is exponentially stable in the mean square sense.
\end{proof}

\smallskip 
\noindent We denote by $p_t^*$ the adjoint operator to $p_t$  defined  in (\ref{transition prob}), i.e. 
\[
 p_t^* \rho(dx) = \int_H p_t(y,dx) \rho(dy), \quad t \geq 0.
\]
Recall that a probability measure $\pi$ on $(H, \mathcal{B}(H))$ is called \textit{invariant measure} for the semigroup $(p_t)_{t \geq 0}$ if and only if $p_t^* \pi = \pi$ holds for each $t \geq 0$.
Let $\mathcal{P}_2(H)$ be the space of Borel probability measures $\rho$ on $(H, \mathcal{B}(H))$ with finite second moments. Recall that $\mathcal{P}_2(H)$ is separable and complete when equipped with the \textit{Wasserstein-2-distance}
\begin{equation}\label{eq: Wasserstein distance}
 \mathrm{W}_2(\rho, \widetilde{\rho}) = \inf_{ G \in \mathcal{H}(\rho, \widetilde{\rho})} \left( \int_{H \times H} \| x - y \|_H^2 G(dx,dy) \right)^{\frac{1}{2}}, \quad \rho, \widetilde{\rho} \in \mathcal{P}_2(H).
\end{equation}
Here $\mathcal{H}(\rho,\widetilde{\rho})$ denotes the set of all couplings of $(\rho, \widetilde{\rho})$, i.e. Borel probability measures on $H \times H$ whose marginals are given by $\rho$ and $\widetilde{\rho}$, respectively, see \cite[Section 6]{Villanibook} for a general introduction to couplings and Wasserstein distances.\\

\noindent 
As a consequence of our key stability estimate (\ref{stability Difference}) we can provide, by following  the proof of Theorem 4.1 in \cite{FFRSch2020},  a proof for the existence and uniqueness of a unique limiting distribution in the spirit of classical results such as
\cite[Section 16]{PZbook}, \cite[Chapter 11, Section 5]{DZbook}, and \cite{Rusinek10}. 
\begin{theorem}
Assume that the conditions A) - E)  are  satisfied  for  (\ref{SPDE-Hilbert}),  and the conditions i)-iii) in Theorem \ref{Thdissipativity} hold. Then 

 \begin{equation}\label{eq: 00}
  \mathrm{W}_2(p_t^* \rho, p_t^* \widetilde{\rho}) \leq  \mathrm{W}_2(\rho, \widetilde{\rho})\mathrm{e}^{- \epsilon t/2}, \quad t \geq 0,
 \end{equation}
 holds for any $\rho, \widetilde{\rho} \in \mathcal{P}_2(H)$.
 In particular, the Markov process determined by  (\ref{SPDE-Hilbert}) has a unique invariant measure $\pi$. This measure has finite second moments and it holds that
 \begin{equation}\label{eq: 01}
  \mathrm{W}_2(p_t^*\rho, \pi) \leq \mathrm{W}_2(\rho, \pi) \mathrm{e}^{- \epsilon t/2 }, \quad t \geq 0,
 \end{equation}
 for each $\rho \in \mathcal{P}_2(H)$.
\end{theorem}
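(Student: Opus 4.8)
The plan is to derive the Wasserstein contraction (\ref{eq: 00}) directly from the pathwise stability estimate (\ref{stability Difference}), and then obtain existence and uniqueness of the invariant measure by a standard completeness-plus-Cauchy-sequence argument on $(\mathcal{P}_2(H), \mathrm{W}_2)$. First, I would fix $\rho, \widetilde{\rho} \in \mathcal{P}_2(H)$ and let $G \in \mathcal{H}(\rho, \widetilde{\rho})$ be an arbitrary coupling; on an enlarged probability space (or after enlarging $\mathcal{F}_0$), pick $\mathcal{F}_0$-measurable random variables $\xi, \eta$ with joint law $G$, so that $\xi \sim \rho$, $\eta \sim \widetilde{\rho}$, and $\E[\|\xi - \eta\|^2] = \int_{H\times H} \|x-y\|^2 G(dx,dy)$. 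By Theorem \ref{theorem Markov-SPDE} the solutions $X_t^\xi$ and $X_t^\eta$ exist in $S_T^2$, and by the Markov property of Remark \ref{Remark Feller} their laws at time $t$ are $p_t^*\rho$ and $p_t^*\widetilde{\rho}$ respectively. The joint law of $(X_t^\xi, X_t^\eta)$ is then a coupling of $(p_t^*\rho, p_t^*\widetilde{\rho})$, so by definition of $\mathrm{W}_2$ and Theorem \ref{Thdissipativity},
\begin{equation*}
 \mathrm{W}_2(p_t^*\rho, p_t^*\widetilde{\rho})^2 \leq \E[\|X_t^\xi - X_t^\eta\|^2] \leq \mathrm{e}^{-\epsilon t} \E[\|\xi - \eta\|^2] = \mathrm{e}^{-\epsilon t}\int_{H\times H}\|x-y\|^2 G(dx,dy).
\end{equation*}
Taking the infimum over all couplings $G$ gives $\mathrm{W}_2(p_t^*\rho, p_t^*\widetilde{\rho})^2 \leq \mathrm{e}^{-\epsilon t}\,\mathrm{W}_2(\rho,\widetilde{\rho})^2$, which is (\ref{eq: 00}) after taking square roots.

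Next, for existence of the invariant measure, I would fix any $x_0 \in H$ (e.g. $x_0 = 0$) and consider the sequence $\rho_n := p_n^*\delta_{x_0} \in \mathcal{P}_2(H)$, $n \in \mathbb{N}$ (membership in $\mathcal{P}_2(H)$ follows since $X_n^{x_0} \in S_n^2$). Using the semigroup property $p_{n+m}^* = p_n^* p_m^*$ together with (\ref{eq: 00}), one estimates, for $m \geq n$,
\begin{equation*}
 \mathrm{W}_2(\rho_m, \rho_n) = \mathrm{W}_2(p_n^*\rho_{m-n}, p_n^*\delta_{x_0}) \leq \mathrm{e}^{-\epsilon n/2}\,\mathrm{W}_2(\rho_{m-n}, \delta_{x_0}),
\end{equation*}
and since $\mathrm{W}_2(\rho_{m-n}, \delta_{x_0})^2 = \E[\|X_{m-n}^{x_0} - x_0\|^2]$ is bounded uniformly in $m-n$ (a moment bound following from Assumptions C), D) together with Lemma \ref{lemma-conv-S2} and Gronwall, exactly as in the proof of Theorem \ref{theorem Markov-SPDE}), the right-hand side tends to $0$ as $n \to \infty$. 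Hence $(\rho_n)$ is Cauchy in the complete metric space $(\mathcal{P}_2(H), \mathrm{W}_2)$ and converges to some $\pi \in \mathcal{P}_2(H)$. Applying (\ref{eq: 00}) with the $C_b$-Feller property from Remark \ref{Remark Feller} (so that $p_t^*$ is $\mathrm{W}_2$-continuous, being even a contraction), one passes to the limit in $p_t^*\rho_n = \rho_{n+t}$ — or rather in $p_t^*\rho_{n} \to p_t^*\pi$ versus $\rho_{n} \to \pi$ along integer shifts — to conclude $p_t^*\pi = \pi$ for all $t \geq 0$; uniqueness is immediate from (\ref{eq: 00}), since two invariant measures $\pi, \pi'$ satisfy $\mathrm{W}_2(\pi,\pi') = \mathrm{W}_2(p_t^*\pi, p_t^*\pi') \leq \mathrm{e}^{-\epsilon t/2}\mathrm{W}_2(\pi,\pi') \to 0$. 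Finally, (\ref{eq: 01}) is just (\ref{eq: 00}) with $\widetilde{\rho} = \pi$ and $p_t^*\pi = \pi$.

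The main obstacle I anticipate is not the contraction estimate itself — that is essentially a repackaging of (\ref{stability Difference}) via coupling — but the measure-theoretic bookkeeping needed to realize an arbitrary coupling $G$ as the joint law of two $\mathcal{F}_0$-measurable initial conditions while keeping the solution theory of Theorem \ref{theorem Markov-SPDE} applicable; one must ensure the underlying filtered probability space is rich enough (or enlarge it without disturbing the noise), and check that the resulting pair of solutions indeed has marginal laws $p_t^*\rho$ and $p_t^*\widetilde{\rho}$, which requires the measurability in the initial condition noted in Remark \ref{Remark Feller}. A secondary technical point is the uniform-in-time second-moment bound $\sup_{t \geq 0}\E[\|X_t^{x_0}\|^2] < \infty$ used in the Cauchy argument: this does not follow from the finite-horizon estimates alone but does follow from the dissipativity condition i) together with condition ii), by applying the It\^o formula of Theorem \ref{thm-Ito} to $\mathcal{H}(x) = \|x\|^2$ (as justified by condition iii) and the Remark following Theorem \ref{thm-Ito}) and running the same Gronwall argument as in the proof of Theorem \ref{Thdissipativity}, now with a single solution and a constant forcing term controlled by Assumption D). Once these two points are in place, the remainder is routine.
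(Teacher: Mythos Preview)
Your proposal is correct and follows essentially the same strategy as the paper: derive the $\mathrm{W}_2$-contraction from the stability estimate (\ref{stability Difference}) via coupling, then run a Cauchy argument in $(\mathcal{P}_2(H),\mathrm{W}_2)$ for existence and read off uniqueness and (\ref{eq: 01}) from the contraction. The only cosmetic difference is that the paper first specializes Theorem~\ref{Thdissipativity} to deterministic initial points $x,y\in H$ to get $\mathrm{W}_2(p_t^*\delta_x,p_t^*\delta_y)\le e^{-\epsilon t/2}\|x-y\|_H$ and then asserts that this ``readily yields'' (\ref{eq: 00}), whereas you invoke Theorem~\ref{Thdissipativity} directly with random $\xi,\eta$ realizing an arbitrary coupling $G$; your version is in fact the cleaner way to make that step rigorous, and your discussion of the two technical points (realizing the coupling on the given filtered space, and the uniform-in-time second-moment bound needed for the Cauchy step) correctly identifies exactly what the paper's phrase ``standard Cauchy argument'' is suppressing.
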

\begin{proof}
 From Theorem \ref{Thdissipativity} it follows
 \begin{equation}
     \E[ \| X_t^x - X_t^y\|_H^2 ] \leq \mathrm{e}^{-\epsilon t} \| x-y\|_H^2, \quad x,y \in H. \nonumber
 \end{equation} 
 Using the definition of the Wasserstein distance, we conclude that 
 \[
  \mathrm{W}_2(p_t^* \delta_x, p_t^* \delta_y) \leq \left( \E[ \| X_t^x - X_t^y\|_H^2 ] \right)^{1/2}
  \leq \|x-y\|_H \mathrm{e}^{- \epsilon t/2}.
 \]
 The latter one readily yields (\ref{eq: 00}). 
 Finally, the existence and uniqueness of an invariant measure as well as (\ref{eq: 01}) can be derived from (\ref{eq: 00}) combined with a standard Cauchy argument. 
\end{proof} 
In  \cite{FFRSch2020} we introduced  a ``generalized dissipativity condition'' and  studied SPDEs with  multiple invariant measures. There we developed  further the methods presented in this Section, which have been mainly derived from Albeverio et al. \cite{AGMRS2017} in combination with the results obtained in \cite{AMR2009},  \cite{MRbook}.

  \bigskip
\paragraph{\textbf{Acknowledgment.}}
I thank Peter Kuchling and Baris Ugurcan for a careful reading of part of this article. 

  \bigskip
\paragraph{\textbf{Comment  by Barbara R\"udiger}} My co-author and friend V. Mandrekar (Atma) passed away the 23  June 2021. A couple of days before his departure he contacted me through email to make sure the  procedure for the submission of this article would be successful. The invitation to contribute  to this Volume, dedicated to Sergio Albeverio, was accepted by him with enthusiasm.\\
\noindent Atma and Sergio had, to my feeling, a deep respect for each other and, despite  the geographic distance, a solid friendship. I think that this friendship and respect is also due to common aspects they have in their character and soul: both are very generous in sharing with other scientists their original ideas. Both trust in youngsters and enjoy knowing  that they can contribute to these  with their own developments and ideas, as well. This way they both are friends,  supporters, coaches and co -authors to many young (and in the meanwhile older) mathematicians and physists. I feel very lucky to be among them.

\bibliographystyle{amsplain}
\phantomsection\addcontentsline{toc}{section}{\refname}\bibliography{Bibliography}

\providecommand{\bysame}{\leavevmode\hbox to3em{\hrulefill}\thinspace}
\providecommand{\MR}{\relax\ifhmode\unskip\space\fi MR }
\providecommand{\MRhref}[2]{%
  \href{http://www.ams.org/mathscinet-getitem?mr=#1}{#2}
}
\providecommand{\href}[2]{#2}
\begin{thebibliography}{10}

\bibitem{AGMRS2017}
Sergio Albeverio, Leszek Gawarecki, Vidyadhar Mandrekar, Barbara R\"{u}diger,
  and Barun Sarkar, \emph{It\^{o} formula for mild solutions of {SPDE}s with
  {G}aussian and non-{G}aussian noise and applications to stability
  properties}, Random Oper. Stoch. Equ. \textbf{25} (2017), no.~2, 79--105.

\bibitem{AMR2009}
Sergio Albeverio, Vidyadhar Mandrekar, and Barbara R\"{u}diger, \emph{Existence
  of mild solutions for stochastic differential equations and semilinear
  equations with non-{G}aussian {L}\'{e}vy noise}, Stochastic Process. Appl.
  \textbf{119} (2009), no.~3, 835--863.

\bibitem{AR2005}
Sergio Albeverio and Barbara R\"{u}diger, \emph{Stochastic integrals and the
  {L}\'{e}vy- it\^{o} decomposition theorem on separable {B}anach spaces},
  Stoch. Anal. Appl. \textbf{23} (2005), no.~2.

\bibitem{AWZ1998}
Sergio Albeverio, Jiang-Lun Wu, and Tu-Sheng Zhang, \emph{Parabolic {SPDE}s
  driven by {P}oisson white noise}, Stochastic Process. Appl. \textbf{74}
  (1998), no.~1, 21--36.

\bibitem{Abook}
David Applebaum, \emph{L\'{e}vy processes and stochastic calculus}, second ed.,
  Cambridge Studies in Advanced Mathematics, vol. 116, Cambridge University
  Press, Cambridge, 2009.

\bibitem{Dettweiler1983}
E.~Dettweiler, \emph{Banach space valued processes with independent increments
  and stochastic integration},  \textbf{990} (1983), 54--83. \MR{707509}

\bibitem{FFRSch2020}
B\'{a}lint Farkas, Martin Friesen, Barbara R\"{u}diger, and Schroers Dennis,
  \emph{On a class of stochastic partial differential equations with multiple
  invariant measures}, NoDEA Nonlinear Differential Equations Appl. \textbf{28}
  (2021), no.~3, Paper No. 28, 46.

\bibitem{GMbook}
Leszek Gawarecki and Vidyadhar Mandrekar, \emph{Stochastic differential
  equations in infinite dimensions with applications to stochastic partial
  differential equations}, Probability and its Applications (New York),
  Springer, Heidelberg, 2011.

\bibitem{IWbook}
Nobuyuki Ikeda and Shinzo Watanabe, \emph{Stochastic differential equations and
  diffusion processes}, second ed., North-Holland Mathematical Library,
  vol.~24, North-Holland Publishing Co., Amsterdam; Kodansha, Ltd., Tokyo,
  1989.

\bibitem{KXbook}
Gopinath Kallianpur and Jie Xiong, \emph{Stochastic differential equations in
  infinite-dimensional spaces}, Institute of Mathematical Statistics Lecture
  Notes---Monograph Series, vol.~26, Institute of Mathematical Statistics,
  Hayward, CA, 1995.

\bibitem{MR2009}
V.~Mandrekar and B.~R\"{u}diger, \emph{Relation between stochastic integrals
  and the geometry of {B}anach spaces}, Stoch. Anal. Appl. \textbf{27} (2009),
  no.~6, 1201--1211.

\bibitem{MRbook}
Vidyadhar Mandrekar and Barbara R\"{u}diger, \emph{Stochastic integration in
  {B}anach spaces}, Probability Theory and Stochastic Modelling, vol.~73,
  Springer, Cham, 2015, Theory and applications. \MR{3243582}

\bibitem{MRT2013}
Vidyadhar Mandrekar, Barbara R\"{u}diger, and Stefan Tappe, \emph{It\^{o}'s
  formula for {B}anach-space-valued jump processes driven by {P}oisson random
  measures},  \textbf{67} (2013), 171--186. \MR{3380099}

\bibitem{MW2011}
Vidyadhar Mandrekar and Li~Wang, \emph{Asymptotic properties of stochastic
  partial differential equations in {H}ilbert spaces driven by non-{G}aussian
  noise}, Commun. Stoch. Anal. \textbf{5} (2011), no.~2, 309--331.

\bibitem{Pazy}
A.~Pazy, \emph{Semigroups of linear operators and applications to partial
  differential equations}, Applied Mathematical Sciences, vol.~44,
  Springer-Verlag, New York, 1983.

\bibitem{PZbook}
S.~Peszat and J.~Zabczyk, \emph{Stochastic partial differential equations with
  {L}\'{e}vy noise}, Encyclopedia of Mathematics and its Applications, vol.
  113, Cambridge University Press, Cambridge, 2007, An evolution equation
  approach.

\bibitem{DZbook}
Giuseppe~Da Prato and Jerzy Zabczyk, \emph{Stochastic equations in infinite
  dimensions}, second ed., Encyclopedia of Mathematics and its Applications,
  vol. 152, Cambridge University Press, Cambridge, 2014.

\bibitem{Ru2004}
Barbara R\"{u}diger, \emph{Stochastic integration with respect to compensated
  {P}oisson random measures on separable {B}anach spaces}, Stoch. Stoch. Rep.
  \textbf{76} (2004), no.~3, 213--242.

\bibitem{RZ2006}
Barbara R\"{u}diger and Giacomo Ziglio, \emph{It\^{o} formula for stochastic
  integrals w.r.t. compensated {P}oisson random measures on separable {B}anach
  spaces}, Stochastics \textbf{78} (2006), no.~6, 377--410.

\bibitem{Rusinek10}
Anna Rusinek, \emph{{{Mean Reversion for HJMM Forward Rate Models}}}, Advances
  in Applied Probability \textbf{42} (2010), no.~2, 371–391.

\bibitem{vNVWbook}
Jan van Neerven, Mark Veraar, and Lutz Weis, \emph{Stochastic integration in
  {B}anach space a survey}, Progr. Probab., vol.~68, Birkh\"{a}user/Springer,
  Basel, 2015.

\bibitem{Villanibook}
C\'{e}dric Villani, \emph{Optimal transport}, Grundlehren der Mathematischen
  Wissenschaften [Fundamental Principles of Mathematical Sciences], vol. 338,
  Springer-Verlag, Berlin, 2009, Old and new.

\end{thebibliography}

\end{document}